\documentclass[oneside]{amsart}

\usepackage[utf8]{inputenc}
\usepackage{svg}
\usepackage{anyfontsize}
\usepackage{hyperref}
\usepackage{geometry}

\usepackage{amsmath}
\usepackage{amssymb}
\usepackage{amsthm}
\usepackage{graphicx}
\usepackage{blindtext}
\usepackage[textwidth=30mm]{todonotes}
\setlength{\marginparwidth}{3cm}

\usepackage{cite}	

\usepackage{booktabs}
\usepackage{multirow}

\usepackage{algorithm}
\usepackage{algpseudocode}
\usepackage{dsfont}
\usepackage{bbm}
\usepackage{subcaption} 

\usepackage{pgf}
\usepackage{tikz}
\usepackage{xcolor}
\usepackage{mathtools}
\usepackage{appendix}

\usepackage{siunitx}  


\theoremstyle{definition}
\newtheorem{theorem}{Theorem}[section]
\newtheorem{lemma}[theorem]{Lemma}
\newtheorem{proposition}[theorem]{Proposition}

\newtheorem{remark}[theorem]{Remark}
\newtheorem{example}{Example}[section]

\usepackage[nameinlink]{cleveref}
\crefname{equation}{}{}
\crefname{proposition}{Proposition}{Propositions}
\crefname{theorem}{Theorem}{Theorems}
\crefname{lemma}{Lemma}{Lemmas}
\crefname{corollary}{Corollary}{Corollaries}
\crefname{remark}{Remark}{Remarks}
\crefname{section}{Section}{Sections}
\crefname{figure}{Figure}{Figures}
\crefname{algorithm}{Algorithm}{Algorithms}
\crefname{example}{Example}{Examples}
\crefname{table}{Table}{Tables}

\DeclareMathOperator{\trace}{tr}
\DeclareMathOperator{\rank}{rank} 
\DeclareMathOperator{\sign}{sign}
\DeclareMathOperator{\diag}{diag} 
\DeclareMathOperator{\vspan}{span} 	

\newcommand{\R}{\mathbb{R}} 
\newcommand{\C}{\mathbb{C}} 
\newcommand{\N}{\mathbb{N}} 
\newcommand{\prob}{\mathbb{P}}
\newcommand{\expec}{\mathbb{E}}

\renewcommand{\vec}[1]{\boldsymbol{#1}}		
\newcommand{\de}{\mathrm{d}} 


\newcommand{\kryl}{\mathcal{K}}		
\newcommand{\rat}{\mathcal{Q}}		

\newcommand{\dsum}{\displaystyle\sum}


\DeclarePairedDelimiter{\abs}{\lvert}{\rvert}
\DeclarePairedDelimiter{\norm}{\lVert}{\rVert}

\newcommand{\ceil}[1]{\left \lceil #1 \right \rceil }

\newcommand{\card}{\#}	
\DeclareMathOperator{\eigcount}{n_e}	
\newcommand{\quadform}{\texttt{q}}		
\newcommand{\nnz}{\texttt{nnz}}			
\newcommand{\postbound}{\texttt{bound}}	
\newcommand{\qfupper}{\texttt{q}^{\text{U}}}	
\newcommand{\qflower}{\texttt{q}^{\text{L}}}	
\newcommand{\qfupperhat}{\widehat{\texttt{q}}^{\text{U}}}	
\newcommand{\qflowerhat}{\widehat{\texttt{q}}^{\text{L}}}	
\newcommand{\qfupperhatsafe}{\widehat{\texttt{q}}^{\mathrm{U}\star}}	
\newcommand{\qflowerhatsafe}{\widehat{\texttt{q}}^{\mathrm{L}\star}}	
\newcommand{\traceH}[1]{\trace^\mathrm{H}_{#1}}	
\DeclareMathOperator{\spec}{\Lambda} 		


\newcommand{\rev}[1]{#1}		

\numberwithin{equation}{section}

\graphicspath{{./figures/}}

\begin{document}

\title[Estimation of spectral gaps]{Estimation of spectral gaps for sparse symmetric matrices}
\author{Michele Benzi}
\address{Scuola Normale Superiore, Piazza dei Cavalieri, 7, 56126 Pisa, Italy}
\email{michele.benzi@sns.it}

\author{Michele Rinelli}
\address{Department of Computer Science, KU Leuven, Celestijnenlaan 200A, 3001 Leuven, Belgium}
\email{michele.rinelli@kuleuven.be}

\author{Igor Simunec}
\address{Institute of Mathematics, École Polytechnique Fédérale de Lausanne, 1015 Lausanne, Switzerland}
\email{igor.simunec@epfl.ch}

\subjclass[2010]{65F15, 65F60}

\keywords{spectral gap, spectral projector, trace estimation, Lanczos algorithm}

\begin{abstract}
	In this paper we propose and analyze an algorithm for identifying spectral gaps of a real symmetric matrix $A$ by simultaneously approximating the traces of spectral projectors associated with multiple different spectral slices. 
	Our method utilizes Hutchinson's stochastic trace estimator together with the Lanczos algorithm to approximate quadratic forms involving spectral projectors. 
	Instead of focusing on determining the gap between two particular consecutive eigenvalues of $A$, we aim to find all gaps that are wider than a specified threshold. 
	By examining the problem from this perspective, and thoroughly analyzing both the Hutchinson and the Lanczos components of the algorithm, we obtain error bounds that allow us to determine the number of Hutchinson's sample vectors and Lanczos iterations needed to ensure the detection of all gaps above the target width with high probability. 
	In particular, we conclude that the most efficient strategy is to always use a single random sample vector for Hutchinson's estimator and concentrate all computational effort in the Lanczos algorithm.
	Our numerical experiments demonstrate the efficiency and reliability of this approach.
\end{abstract}

\maketitle

\section{Introduction}
\label{sec:introduction}

We consider the problem of locating gaps in the spectrum of an $n \times n$ real symmetric matrix~$A$, i.e., finding intervals that do not contain any eigenvalue of~$A$.
A variant of this problem that is more commonly encountered in the literature is the following: given an integer $k$ with $1 \le k < n$, find a real number $\mu$ such that exactly $k$ of the eigenvalues of $A$ (counted with their multiplicities) are strictly below~$\mu$.
This problem can be equivalently restated as finding $\mu$ such that the spectral projector~$P_{\mu}$ onto the subspace spanned by the eigenvectors of $A$ associated with eigenvalues strictly smaller than~$\mu$ satisfies~$\rank(P_{\mu}) = k$; since for a projector the rank is equal to the trace, we can also write this requirement as~$\trace(P_\mu) = k$.
If we denote the eigenvalues of $A$ by $\lambda_i$ and we label them in non-decreasing order, this problem has a solution only if $\lambda_k < \lambda_{k+1}$ (non-degeneracy assumption).
In this case, any $\mu$ lying in the open interval $(\lambda_k, \lambda_{k+1})$ is a solution.
Instead of looking for the gap for a specific value of $k$, the approach that we present here aims to find all gaps in the spectrum with width above a certain threshold, and in addition we obtain an estimate of the number of eigenvalues below each gap.
In practical cases where it is of interest to find $\mu$ in the gap between $\lambda_k$ and $\lambda_{k+1}$, the gap $(\lambda_k, \lambda_{k+1})$ is often relatively large, so we expect to be able to find it by looking for all gaps above a certain width.

\rev{
There are several applications where it is required to locate gaps in the spectrum of a given symmetric matrix.  In Kohn-Sham Density Functional Theory for electronic structure computations \cite{LLY19}, at each step of a self-consistent field iteration it is necessary to compute either 
the eigenvectors of a symmetric matrix (the linearized discrete Kohn-Sham Hamiltonian) associated with eigenvalues below the {\it Fermi level} (or {\it chemical potential}, usually denoted by $\mu$), corresponding to the so-called {\it occupied states} of the system, or equivalently the corresponding spectral projector $P_{\mu}$. In the case of insulators at zero electronic temperature, the  linearized Hamiltonians exhibit a gap separating the first~$k$ eigenvalues from the rest of the spectrum, where $k$ is the number of electrons, and locating this gap is equivalent to estimating the Fermi level $\mu$.
 In particular, in linear scaling electronic structure computations the spectral projector is approximated either by polynomials or by rational functions that act as filters, 
 i.e., approximate the step function~$h_\mu(\lambda)$ that takes the value 1 for $\lambda < \mu$ and $0$ for $\lambda > \mu$ (see, e.g., \cite{BenziProjector13}). 
Clearly, in order to use this approach it is first necessary to estimate the Fermi level $\mu$, which is a nontrivial task.

Another potential area of application arises in solid state physics, where
the location of gaps in the energy spectrum of quantum-mechanical systems described by Schr\"odinger operators with periodic potentials
is an important problem; see, e.g., \cite{KM18}, as well as \cite{RS78}.

An additional possible area of application is {\em spectral clustering}, which uses the eigenvectors of the graph Laplacian to partition a graph, see \cite{AEGL21} and references therein. 
The stability of the partitioning under small perturbations of the weights is governed by the gaps in the spectrum of the graph Laplacian; in particular, relatively large gaps correspond to stable partitions. Hence, identifying sufficiently large gaps in the spectrum of the Laplacian matrix is an important preliminary step for any spectral clustering algorithm.
	
	Finally, as a further application we mention estimating the stability of invariant subspaces under perturbations.  It is well known in numerical linear algebra
	that individual eigenvectors of symmetric matrices corresponding to a cluster of tightly spaced eigenvalues will be highly ill-conditioned.  On the other hand, if the 
	cluster is well-separated from the rest of the spectrum, the invariant subspace spanned by those eigenvectors is well-conditioned and can be determined
	with high accuracy; furthermore, iterative methods designed to approximate the invariant subspace (or, equivalently, the orthogonal projector onto it)
	will typically converge rapidly. Hence, being able to efficiently identify gaps in the spectrum of a symmetric matrix $A$ provides a way to estimate the conditioning
	of invariant subspaces.  Moreover, when $A$ is banded, or sparse, the invariant space is ``localized", an important property that can reduce computational
	costs considerably; see, e.g., \cite{VomelParlett11} and also  \cite[Sec.~11.4]{BenziProjector13} and \cite{BenziRinelli22}.
}

In the literature, a few approaches have been proposed to estimate values of $\mu$ lying within the target gap.
It is well known that for any real symmetric matrix $A-\mu I$ there exists a unit lower triangular matrix $L$ and a block diagonal matrix $D$ with $1\times 1$ and $2\times 2$ blocks such that $A-\mu I = LDL^T$; see for example \cite[Chapter~1.3.4]{Bjorck}.
Since $A$ and $D$ are congruent, they have the same inertia, i.e., the same number of positive, negative, and possibly zero eigenvalues (the latter can occur only when $\mu$ happens to be an eigenvalue of $A$, an extremely unlikely occurrence in practice).
Hence, if $A$ is not too large, an $LDL^T$ factorization of $A-\mu I$ can be used to count the number of eigenvalues smaller than $\mu$. If this number is less than~$k$ (larger than $k$) then $\mu$ is increased (respectively, decreased) and the procedure is repeated until the correct value is found.
When $A$ is sparse, one can make use of symmetric row and column permutations to reduce the fill-in in $L$ (and therefore the arithmetic and storage costs) while preserving sparsity, see \cite[Chapter~7]{ST}.
A major drawback of this approach is its cost, and especially the fact that having computed the $LDL^T$ factorization of $A-\mu I$ for a certain~$\mu$ is of no help in computing the factorization for a different value of $\mu$, hence using matrix factorizations in a trial-and-error fashion can be prohibitively expensive, even assuming that a factorization can be computed at all.

Different techniques have been used in the context of linear scaling methods.
In these methods, an initial guess $\mu_0\approx \mu$ is used in constructing a polynomial approximation to the projector $P_{\mu_0}$ by an iterative process known as {\it purification}, essentially a Newton-type method.
The value of $\mu$ is adjusted in the course of the iterative process until the prescribed value $k$ of the trace is obtained; hence, the determination of $\mu$ and the approximation of the projector $P_\mu$ are interweaved.
The main cost in this approach is the need to repeatedly solve $n\times n$ linear systems with variable coefficient matrix at each iteration.
We refer to \cite{NTC03,Nik11} for details on these adaptive procedures.

Yet another approach is the one that combines stochastic trace estimators with the Lanczos algorithm to compute an approximation of $\trace(P_\mu)$. An advantage of this approach is that we can exploit the shift-invariance property of Krylov subspaces (see, e.g., \cite{FM99}) to efficiently approximate the trace of the spectral projectors $P_\mu$ associated with many different values of $\mu$ at the same time.
The combination of Hutchinson's estimator and the Lanczos algorithm has also appeared in literature on related problems that exploit the trace of spectral projectors, such as the estimation of the number of eigenvalues of a matrix~$A$ contained in a given interval $[a, b]$ or the estimation of spectral densities, see, for instance,~\cite{DPS16, LSY16, BKM22, CTU21}. In the context of spectral density approximation, a similar approach is the kernel polynomial method, which was introduced in \cite{SilverRoder94, Wang94} and uses a polynomial expansion based on Chebyshev polynomials. 

Here we focus on the approach that combines Hutchinson's trace estimator and the Lanczos algorithm to estimate the trace of the spectral projectors $P_\mu$. 
When using this kind of approach, it is essential to have a criterion to choose the parameters of the algorithm, namely the number of random vectors $\vec x_i$ for Hutchinson's estimator and the number of Lanczos iterations, in order to minimize the computational cost and guarantee the correctness of the results. However, although there exist theoretical results on the convergence speed of this approach \rev{in the literature, they are mainly focused on the global approximation of the spectral density and they are not satisfactory when applied to gap detection; in particular, they are not useful to determine} how the parameters should be selected.
The main purpose of this work is to answer this question for the problem of finding gaps in the spectrum of a matrix. By changing the point of view from finding $\mu$ such that the number of eigenvalues below $\mu$ is exactly~$k$ to finding all the gaps in the spectrum with width above a certain threshold, we are able to provide a rigorous analysis that allows us to choose the input parameters to ensure that all the target gaps are found, and at the same time minimize the computational cost. Our analysis will determine that for this problem the most efficient choice is to use a single quadratic form $\vec x^T P_\mu \vec x$ to approximate~$\trace(P_\mu)$, but in order to reach this conclusion we first have to analyze the more general algorithm that uses Hutchinson's stochastic trace estimator.
This analysis will result in an algorithm that simultaneously computes upper and lower bounds to $\vec x^T P_\mu \vec x$ for all considered values of $\mu$, and exploits them to find intervals that contain no eigenvalues of~$A$ with high probability.

The rest of the paper is organized as follows. 
In \cref{sec:algorithm-overview} we describe a basic version of the algorithm that we are going to use to detect the gaps in the spectrum of~$A$. 
Each component of the algorithm is then analyzed in detail in \cref{sec:algorithm-analysis}, with a focus on bounding the error. 
Then, in \cref{sec:final-algorithm} we put together the analysis to determine the best choice of input parameters and we obtain an efficient and robust algorithm that can give guarantees on its output. 
In \cref{sec:numerical-experiments} we conduct some numerical experiments to evaluate the performance and reliability of our approach.
Finally, \cref{sec:conclusions} contains some concluding remarks.

\section{Algorithm overview}
\label{sec:algorithm-overview}

We start by introducing some notation to establish the framework for our algorithm.
Given $\mu \in \R$, let  $h_\mu: \R \to \R$ denote the Heaviside function
\begin{equation*}
	h_\mu(x) = \begin{cases}
		1   & \text{for $x < \mu$,} \\
		1/2 & \text{for $x = \mu$,} \\
		0   & \text{for $x > \mu$.}
	\end{cases}
\end{equation*}
The step function $h_\mu$ is closely related to the sign function, indeed we have $h_\mu(x) = -\frac{1}{2} \sign(x - \mu) + \frac{1}{2}$.
We denote by $\lambda_i$ the eigenvalues of $A$ in non-decreasing order, counted according to their multiplicities, and by $\{\vec u_i\}_{i = 1}^n \subset \R^n$ an orthonormal basis of eigenvectors, where $\vec u_i$ is an eigenvector associated with $\lambda_i$.
Assuming that $\mu$ does not coincide with any of the eigenvalues of the matrix~$A$, we can define the spectral projector
\begin{equation*}
	P_\mu := h_\mu(A) = \sum_{i \, : \, \lambda_i < \mu} \vec u_i \vec u_i^T.
\end{equation*}
\rev{Note that although we can define $h_\mu(A)$ for all $\mu$, since $h_\mu(\mu) = 1/2$ this definition only yields a spectral projector if $\mu \ne \lambda_i$ for all $i = 1, \dots, n$, so in the following we are always going to assume that this condition holds.} 
Since the eigenvalues of $P_\mu$ are all either $0$ or $1$, we have
\begin{equation*}
	\trace(P_\mu) = \rank(P_\mu) = \card\{ i \, : \, \lambda_i < \mu \} =: \eigcount(\mu),
\end{equation*}
where $\eigcount(\mu)$ denotes the number of eigenvalues of $A$ that are strictly smaller than~$\mu$.
The problem of finding gaps in the spectrum of $A$ can be formulated as finding intervals $[a_j, b_j]$ such that $\trace(P_\mu)$ is constant for $\mu \in [a_j, b_j]$. We are going to determine gaps in the spectrum of $A$ by approximating~$\trace(P_\mu)$ for several different values of $\mu$, using Hutchinson's trace estimator combined with the Lanczos method for the approximation of quadratic forms.
We first focus on the approximation of~$\trace(P_\mu)$ for a fixed value of $\mu$.

\subsection{Hutchinson's trace estimator}
\label{subsec:hutchinson-trace-estimator}

Stochastic trace estimators approximate the trace of a matrix $B$ that is accessible via matrix-vector products by making use of the fact that, for any random vector~$\vec x$ such that $\expec [\vec x \vec x^T] = I$, we have $\expec [\vec x^T B \vec x] = \trace(B)$, where $\expec$ denotes the expected value. 
Hutchinson's trace estimator~\cite{Hutchinson89} is a simple stochastic estimator that generates~$s$ vectors $\vec x_1, \dots, \vec x_s$ with i.i.d.~random entries, that are usually chosen as either Gaussian $\mathcal{N}(0,1)$ or Rademacher (uniform $\pm 1$), and approximates~$\trace(B)$ with
\begin{equation}
	\label{eqn:projector-hutchinson-trace-approximation}
	\traceH{s}(B) := \frac{1}{s} \sum_{i = 1}^s \vec x_i^T B \vec x_i.
\end{equation}
In order to attain an absolute accuracy~$\varepsilon$ on $\trace(B)$, Hutchinson's trace estimator requires $O(\norm{B}_F^2/\varepsilon^2)$ sample vectors \cite{CortinovisKressner22,RKAscher15}. 
More precisely, the error from the Hutchinson's trace estimator when $B$ is symmetric can be bounded by using the following result from \cite{CortinovisKressner22}.

\begin{proposition}[{\hspace{1sp}\cite[Theorem~1]{CortinovisKressner22}}]
	\label{prop:hutchinson-tail-bound--gaussian}
	Let $B$ be a nonzero symmetric matrix, and let $\traceH{s}(B)$ denote the Hutchinson trace approximation obtained using $s$ i.i.d. Gaussian $\mathcal{N}(0,1)$ random vectors.
	If
	\begin{equation*}
		s \ge \frac{4}{\varepsilon^2} \big(\norm{B}_F^2 + \varepsilon \norm{B}_2\big) \log(2/\delta),
	\end{equation*}
	then
	\begin{equation*}
		\prob \big( \abs{\trace(B) - \trace_s^H(B)} \ge \varepsilon \big) \le \delta.
	\end{equation*}
\end{proposition}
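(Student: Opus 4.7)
The plan is to prove this by combining rotational invariance of Gaussian vectors with a Bernstein-type concentration inequality for sums of independent sub-exponential (centered $\chi^2$) random variables.

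First I would diagonalize the symmetric matrix as $B = Q\Lambda Q^T$ with $\Lambda = \diag(\lambda_1, \dots, \lambda_n)$ and exploit the rotational invariance of the standard normal distribution: if $\vec x_i \sim \mathcal{N}(0, I)$, then $\vec z_i := Q^T \vec x_i$ is again $\mathcal{N}(0, I)$, so
\begin{equation*}
    \vec x_i^T B \vec x_i - \trace(B) = \sum_{j=1}^n \lambda_j (z_{ij}^2 - 1),
\end{equation*}
with all $z_{ij}$ i.i.d.\ standard normal. Defining $Y := s\bigl(\traceH{s}(B) - \trace(B)\bigr) = \sum_{i=1}^s \sum_{j=1}^n \lambda_j (z_{ij}^2 - 1)$ thus reduces the problem to controlling a sum of $sn$ independent centered weighted $\chi^2_1$ variables.

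Next I would compute the moment generating function. For $|t| < 1/(2\|B\|_2)$ one has $\mathbb{E}[e^{t\lambda_j(z^2-1)}] = e^{-t\lambda_j}/\sqrt{1 - 2t\lambda_j}$, and using the elementary inequality $-u - \tfrac{1}{2}\log(1-2u) \le u^2/(1 - 2|u|)$ valid for $|u| < 1/2$, I would obtain
\begin{equation*}
    \log \mathbb{E}[e^{tY}] \le \frac{s\, t^2 \|B\|_F^2}{1 - 2t\|B\|_2}, \qquad 0 < t < \frac{1}{2\|B\|_2}.
\end{equation*}
Then, applying a Chernoff bound $\mathbb{P}(Y \ge s\varepsilon) \le \exp\bigl(-s t\varepsilon + \log\mathbb{E}[e^{tY}]\bigr)$ and optimizing over $t$ by the standard sub-exponential choice $t = \varepsilon/\bigl(2(\|B\|_F^2 + \varepsilon\|B\|_2)\bigr)$, I would get the Bernstein-type bound
\begin{equation*}
    \mathbb{P}\bigl(Y \ge s\varepsilon\bigr) \le \exp\!\left(-\frac{s\,\varepsilon^2}{4(\|B\|_F^2 + \varepsilon\|B\|_2)}\right).
\end{equation*}
The analogous argument on $-Y$ handles the lower tail; a union bound introduces the factor $2$.

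Finally, setting the right-hand side less than $\delta/2$ and solving for $s$ yields exactly the threshold $s \ge 4\varepsilon^{-2}(\|B\|_F^2 + \varepsilon\|B\|_2)\log(2/\delta)$ stated in the proposition. The main technical obstacle is tracking the sharp constants: a careless estimate of the MGF or a non-optimal choice of $t$ in the Chernoff step would produce a bound of the correct shape but with worse constants than the $4$ appearing in the statement, so the key is to use the Bernstein form precisely and to verify that the chosen $t$ lies in the admissible range $(0, 1/(2\|B\|_2))$. Beyond this, the argument is a standard concentration inequality for quadratic forms in Gaussian variables and is essentially a specialization of Hanson--Wright.
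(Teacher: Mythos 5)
This proposition is stated in the paper without proof—it is quoted verbatim from \cite[Theorem~1]{CortinovisKressner22}, so there is no ``paper's own proof'' to compare against. Your proposal is a correct reconstruction of the argument used in that reference: diagonalize $B$ and use rotational invariance to reduce to a weighted sum of independent centered $\chi^2_1$ variables, bound the log-MGF by the sub-exponential inequality $-u - \tfrac{1}{2}\log(1-2u) \le u^2/(1-2\abs{u})$, apply the Bernstein-form Chernoff bound with the standard choice of $t$, and finish with a union bound. The constants work out as you indicate, and your check that the optimizing $t$ lies in $(0, 1/(2\norm{B}_2))$ (which uses $\norm{B}_F > 0$, hence the ``nonzero'' hypothesis) is the one subtlety that needs to be verified; you did so correctly.
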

If we use vectors with Rademacher instead of Gaussian entries, the convergence rate of Hutchinson's estimator depends on $\norm{B - \diag(B)}_F$ and $\norm{B - \diag(B)}_2$ instead of $\norm{B}_F$ and $\norm{B}_2$, so in certain situations it can be faster than with Gaussian vectors, for instance when $B$ is diagonally dominant; see \cite[Corollary~1]{CortinovisKressner22} for a precise statement.

The $O(1/\varepsilon^2)$ dependence on $\varepsilon$ of the number of sample vectors $s$ makes it very expensive to obtain highly accurate approximations with stochastic trace estimators. However, this is not an issue for a setting such as ours, where an absolute accuracy $\varepsilon = \frac{1}{2}$ is enough to exactly determine $\trace(P_\mu)$, which is an integer. In contrast, the dependence on $\norm{P_\mu}_F^2$ will make it quite difficult to accurately estimate the number of eigenvalues below~$\mu$, since $\norm{P_\mu}_F^2 = \eigcount(\mu)$ can be quite large.
Note that when $\mu$ is close to the right endpoint of the spectrum, one can consider $I-P_\mu$ instead of $P_\mu$ in order to reduce its norm, since $\trace(I-P_\mu)=n-\eigcount(\mu)$.
However, this does not handle the case where $\mu$ is near the center of the spectrum. 
To overcome this issue, we will have to slightly change our perspective in order to develop an efficient algorithm. This topic will be discussed in greater detail in \cref{subsec:algorithm-analysis--hutchinson}.

\begin{remark}
	\label{rem:other-trace-estimators}
	Since $\rank(P_\mu) = \eigcount(\mu)$ and all the eigenvalues of $P_\mu$ are either~$0$ or~$1$, for this setting it usually does not make much sense to use the improved stochastic trace estimators such as Hutch++ \cite{MMMW21-Hutch++,PerssonCortinovisKressner21} or XTrace \cite{XTrace}, which also employ a low-rank matrix approximation. Indeed, there is no gradual decay in the eigenvalues of $P_\mu$ and hence there is no advantage in computing a low-rank approximation of $P_\mu$, unless $\eigcount(\mu)$ is very small compared to the matrix size.
\end{remark}

\begin{remark}
	\label{rem:probing-methods}
	When the matrix $A$ is sparse, we could alternatively approximate the trace of the matrix function $P_\mu = h_\mu(A)$ with a probing method (see, for instance, \cite{FrommerProbing21,BRS23,frommer2023stochasticprobing}).
	The probing approach has a faster convergence when the function $h_\mu(A)$ is well-approximated by polynomials on the spectrum of~$A$~\cite[Theorem~4.4]{FrommerProbing21}, i.e., it converges similarly to the Lanczos method for the computation of $h_\mu(A) \vec x$. We will see in \cref{subsec:algorithm-analysis--lanczos} that this convergence rate is heavily dependent on the value of $\mu$, so we prefer to use a stochastic trace estimator in order to eliminate this dependence.
\end{remark}

\subsection{Lanczos approximation of quadratic forms}
\label{subsec:lanczos-approximation-of-quadratic-forms}

Within Hutchinson's estimator we have to compute the quadratic forms $\vec x_i^T P_\mu \vec x_i$ for $i = 1, \dots, s$, which can be done efficiently by using the Lanczos method. 
For simplicity, consider a single quadratic form~$\vec x^T h_\mu(A) \vec x$.
Let us denote by $\kryl_m(A, \vec x) = \vspan\{ \vec x, A \vec x, \dots, A^{m-1} \vec x \}$ the Krylov subspace of dimension $m$ associated to $A$ and $\vec x$, and let $V_m \in \R^{n \times m}$ be an orthonormal basis of $\kryl_m(A, \vec x)$ constructed using the Lanczos algorithm~\cite[Algorithm~6.15]{Saad03}. 
The Lanczos algorithm also constructs a tridiagonal matrix $\underline{T_m} \in \R^{(m+1) \times m}$ that satisfies the Arnoldi relation
\begin{equation*}
	A V_m = V_{m+1} \underline{T}_m.
\end{equation*}
The quadratic form $\vec x^T P_\mu \vec x$ can then be approximated with
\begin{equation}
	\label{eqn:quadform-lanczos-approximation}
	\quadform_m := \norm{\vec x}_2^2 \vec e_1^T h_\mu(T_m) \vec e_1,
\end{equation}
where $T_m = V_m^T A V_m$ is the leading principal $m \times m$ block of $\underline{T}_m$.
The convergence rate of the approximation in \cref{eqn:quadform-lanczos-approximation} depends on how well the function $f$ can be approximated with polynomials over the spectrum of $A$; see~\cref{subsec:algorithm-analysis--lanczos}. The Lanczos approximation of $\vec x^T P_\mu \vec x$ can also be interpreted and analyzed in terms of Gaussian quadrature rules~\cite{GolubMeurant10}.

\subsection{Simultaneous computation for different $\mu$}
\label{subsec:simultaneous-computation-for-different-mu}

The approach described above can be easily modified in order to compute approximations to $\trace(P_\mu)$ simultaneously for several $\mu$, for a cost that is only slightly higher than the cost of the approximation for a single $\mu$.
The key observation is that for a fixed matrix $A$ and vector $\vec x$, we can extract approximations to the quadratic forms $\vec x^T f_j(A) \vec x$ for several different functions $f_j$ from a single Krylov subspace~$\kryl_m(A, \vec x)$.
Since the cost of computing the approximation \cref{eqn:quadform-lanczos-approximation} is dominated by the cost of constructing the orthonormal basis~$V_m$ via the Lanczos algorithm, the additional operations required to compute approximations for different functions represent only a minor part of the overall cost.
More precisely, let us consider $N_f$ different functions $f_j$, $j = 1, \dots, N_f$. For a fixed Krylov subspace dimension~$m$, we can approximate each quadratic form $\vec x^T f_j(A) \vec x$ with
\begin{equation*}
	\quadform_{m, j} := \norm{\vec x}_2^2 \vec e_1^T f_j(T_m) \vec e_1, \qquad j = 1, \dots, N_f.
\end{equation*}
The quantities $\quadform_{m, j}$ can be computed efficiently in the following way. Obtain first an eigenvalue decomposition $T_m = U_m D_m U_m^T$ and compute $f_j(D_m)$ for $j = 1, \dots, N_f$ by applying the functions $f_j$ to the diagonal entries of $D_m$. The Lanczos approximations to the quadratic forms can now be computed with the identity
\begin{equation*}
	\quadform_{m, j} = \vec w_m^T f_j(D_m) \vec w_m, \qquad j = 1, \dots, N_f,
\end{equation*}
where $\vec w_m := \norm{\vec x}_2 U_m^T \vec e_1$ is the same for all $j$ and needs to be computed only once.
If we assume that the small matrix function $f(T_m)$ is computed via an eigenvalue decomposition also in the case of a single function $f$, the only additional operations that we have to do when we have $N_f$ different functions are the computation of the diagonal matrix functions~$f_j(D_m)$ and the computation of the final approximations $\quadform_{m, j} = \vec w_m^T f_j(D_m) \vec w_m$ for all $j$, for a total cost of $O(m N_f)$. This additional cost is negligible compared to the $O(m \, \nnz(A))$ cost for the matrix-vector products with $A$ in the construction of the Krylov subspace $\kryl_m(A, \vec x)$, unless the number of functions~$N_f$ is extremely large.

In total, the cost of computing approximations to $\vec x^T f_j(A) \vec x$ for $j = 1, \dots, N_f$ with the Lanczos algorithm adds up to $O(m \, \nnz(A) + nm + m^2 + m N_f)$, where the $O(n m)$ term is the cost of orthogonalization in the Lanczos algorithm using the three-term recurrence; if full reorthogonalization is used instead to increase stability, the orthogonalization cost would increase to $O(n m^2)$. The $O(m^2)$ term corresponds to the computation of eigenvalues and eigenvectors\footnote{ \rev{The $O(m^2)$ cost for computing eigenvectors of a tridiagonal matrix assumes that the MRRR algorithm \cite{DhillonParlett03,DhillonParlett04} is used, which is now the state of the art in SCALAPACK and other software packages. We note that this algorithm might fail in some extreme cases; however, remedies are known \cite{DhillonParlett05}.}} of the tridiagonal matrix $T_m$, and the~$O(m N_f)$ term corresponds to the computation of $\quadform_{m,j}$ for all $j$.
Note that the convergence rate of the approximation of the quadratic form $\vec x^T f_j(A) \vec x$ can vary significantly depending on the function~$f_j$, so this approach may construct approximations with considerably different errors for different functions, since we are using the same number of iterations $m$ for all functions. We will focus on analyzing this aspect in \cref{subsec:algorithm-analysis--lanczos}.

\begin{remark}
	\label{rem:rational-krylov-subspaces}
	A similar approach could be used to approximate quadratic forms with several different functions using rational Krylov methods~\cite{Guettel13}, which depend on a certain sequence of poles~$\vec \xi = [\xi_1, \dots, \xi_{m-1}]$ and employ as an approximation space the rational Krylov subspace $\rat_m(A, \vec x, \vec \xi) = q_{m-1}(A)^{-1} \kryl_m(A, \vec b)$, with $q_{m-1}(z) := \prod_{j = 1}^{m-1}(z - \xi_j)$.
	However, the convergence of a rational Krylov method is heavily dependent on the poles $\vec \xi$, which should be chosen depending on the function. In particular, in the case of $f_j = h_{\mu_j}$ for different parameters $\mu_j$, a sequence of poles that produces an accurate approximation for a certain $\mu_j$ will give poor approximations for other parameters, especially if they are far from $\mu_j$. For this reason, the Lanczos method is better suited for this scenario. A rational Krylov subspace method could be useful to compute a more accurate approximation for the quadratic form $\vec x^T h_\mu(A) \vec x$ associated with a specific parameter $\mu$, especially in situations in which the Lanczos method converges very slowly.
\end{remark}

Returning now to the approximation of $\trace(P_\mu)$ with Hutchinson's estimator, let us consider $N_f$ different parameters $\{\mu_j\}_{j=1}^{N_f} \subset \R$ and define the functions $f_j = h_{\mu_j}$. Provided that we use the same random sample vectors $\{\vec x_i\}_{i=1}^s$ for the approximation of $\trace(P_{\mu_j})$ via Hutchinson's estimator for all $j$, we can use the same Krylov subspace $\kryl_m(A, \vec x_i)$ to extract approximations to $\vec x_i^T P_{\mu_j} \vec x_i$ for all $\mu_j$ at the same time, with essentially the same cost of approximating a quadratic form for a single parameter $\mu$.
This feature of this approach represents a considerable computational advantage over other methods, such as those based on the $LDL^T$ factorization, which are only able to test a single candidate $\mu$ at a time.

A high-level algorithmic description of the procedure outlined in this section is given in \cref{algorithm:eigenvalue-gap-finder--basic}. This algorithm returns an approximation to $\trace(P_{\mu_j})$ for all $j = 1, \dots, N_f$.
The level of accuracy of these approximations, as well as whether they can be exploited to detect gaps in the spectrum of $A$, is going to be thoroughly investigated in the sections that follow.
We are also going to identify criteria for selecting the input parameters of the method, namely the number of Hutchinson sample vectors $s$ and Lanczos iterations $m$.
The product of this analysis will be an algorithm that, given as input a failure probability $\delta$ and a relative gap width $\theta$, aims to find all the gaps in the spectrum whose relative width is larger than~$\theta$; each one of these gaps is found with probability at least~$1 - \delta$.
In addition, all the gaps found by the algorithm are certified, in the sense that the probability of finding an eigenvalue of~$A$ within a gap is smaller than $\delta$.

\rev{
\begin{remark}
	\label{rem:literature-comparison}
	We mention that \cref{algorithm:eigenvalue-gap-finder--basic} has been extensively used in the literature for related problems, such as spectral density estimation and estimation of eigenvalue counts in an interval \cite{CTU21,BKM22,DPS16}. The main novelty of our work is in the theoretical analysis contained in the following sections, which provides guarantees for the detection of spectral gaps, and allows us to select the parameters $s$ and $m$ in order to minimize the computational cost. We are going to provide a more detailed comparison of our contributions with those found in the literature in \cref{subsec:literature-comparison}.    
\end{remark}
}

\begin{algorithm}
	\caption{Eigenvalue gap finder -- Prototype}
	\label{algorithm:eigenvalue-gap-finder--basic}
	\begin{algorithmic}[1]
		\Require Symmetric matrix $A \in \R^{n \times n}$, number of Hutchinson samples $s$, Krylov subspace dimension $m$, vector of test parameters $\vec \mu = [\mu_1, \dots, \mu_{N_f}]$
		\Ensure $\texttt{tr}_j \approx \trace(P_{\mu_j})$ for $j = 1, \dots, N_f$
		\For {$i = 1, \dots, s$}
		\State Draw a vector $\vec x_i \in \R^n$ with random i.i.d.~Gaussian $\mathcal{N}(0, 1)$ entries
		\State Construct an orthonormal basis $V_m \in \R^{n \times m}$ of the Krylov space $\kryl_m(A, \vec x_i)$ with the Lanczos algorithm and compute the associated tridiagonal matrix $\underline{T}_m \in \R^{(m+1) \times m}$
		\State Compute the eigenvalue decomposition $T_m = U_m D_m U_m^T$ and $\vec w_m = \norm{\vec x_i}_2 U_m^T \vec e_1$
		\For {$j = 1, \dots, N_f$}
		\State Compute $\quadform_{m,j}^{(i)} = \vec w_m^T h_{\mu_j}(D_m)\vec w_m$
		\EndFor
		\EndFor
		\For { $j = 1, \dots, N_f$ }
		\State Compute the trace approximation $\texttt{tr}_j = s^{-1} \sum_{i = 1}^s \quadform_{m, j}^{(i)}$
		\EndFor
	\end{algorithmic}
\end{algorithm}

\section{Algorithm analysis}
\label{sec:algorithm-analysis}

In this section we examine some general properties of the trace approximations computed by \cref{algorithm:eigenvalue-gap-finder--basic}, and we analyze in detail the errors that occur in the Hutchinson and Lanczos parts of the approximation.

\subsection{General properties}
\label{subsec:general-properties}

A simple but essential observation is that the approximations to~$\trace(P_\mu)$ computed by \cref{algorithm:eigenvalue-gap-finder--basic} are increasing in $\mu$.
Let us start by looking at the Hutchinson approximation. Denoting by $(\lambda_j, \vec u_j)_{j = 1}^n$ the normalized eigenpairs of $A$, so that $A = \sum_{j = 1}^n \lambda_j \vec u_j \vec u_j^T$, we can write the trace approximation obtained with Hutchinson's estimator as
\begin{equation*}
	\traceH{s}(P_\mu) = \frac{1}{s} \sum_{i = 1}^s \vec x_i^T P_\mu \vec x_i = \frac{1}{s} \sum_{i = 1}^s \vec x_i^T \Big( \sum_{j \,:\, \lambda_j < \mu} \vec u_j \vec u_j^T \Big) \vec x_i = \frac{1}{s} \sum_{i = 1}^s \sum_{j \,:\, \lambda_j < \mu} \abs{\vec u_j^T \vec x_i}^2.
\end{equation*}
This shows that the Hutchinson trace approximation is monotonic in $\mu$. In addition, the above identity also highlights the fact that Hutchinson's approximation of $\trace(P_\mu)$ is a piecewise constant function, with jumps when $\mu$ coincides with an eigenvalue of $A$. The height of the jump at $\lambda_k$ is given by~$\frac{1}{s} \sum_{i = 1}^s \abs{\vec u_k^T \vec x_i}^2$.

Let us now consider the Lanczos approximation of the quadratic form $\vec x_i^T P_\mu \vec x_i$. Letting~$T_m^{(i)}$ be the projection of $A$ onto $\kryl_m(A, \vec x_i)$ computed by the Lanczos algorithm, and denoting by $(\lambda_j^{(i)}, \vec u_j^{(i)})_{j = 1}^m$ the normalized eigenpairs of $T_m^{(i)}$, we have
\begin{equation*}
	\vec x_i^T P_\mu \vec x_i \approx \quadform_{m, j}^{(i)} := \norm{\vec x_i}_2^2 \vec e_1^T h_\mu(T_m^{(i)}) \vec e_1 = \norm{\vec x_i}_2^2 \sum_{j \,:\, \lambda_j^{(i)} < \mu} \abs{\vec e_1^T \vec u_j^{(i)}}^2.
\end{equation*}
Again, this is an increasing function of $\mu$, with jumps whenever $\mu$ coincides with an eigenvalue of~$T_m^{(i)}$. It follows that the approximation of $\trace(P_\mu)$ computed by \cref{algorithm:eigenvalue-gap-finder--basic} using Hutchinson's estimator and the Lanczos method is an increasing function of $\mu$, with jumps when $\mu$ is equal to an eigenvalue of~$T_m^{(i)}$ for some $i$.

\begin{example}
	\label{example:hutchinson-projector-traceest}
	To illustrate the behavior of the approximation of $\trace(P_\mu)$ computed by \cref{algorithm:eigenvalue-gap-finder--basic} on a simple example, we consider a symmetric matrix $A$ of size $n = 600$, with eigenvalues contained in the interval $[0, 60]$ and three gaps given by the intervals $[20, 21]$, $[30, 32]$ and $[40, 44]$. In \cref{fig:example-hutchinson-projector-traceest} we compare the approximation from \cref{algorithm:eigenvalue-gap-finder--basic} with $m = 50$ Lanczos iterations, Hutchinson's estimator~$\traceH{s}(P_\mu)$ with exact quadratic forms, and the exact trace $\trace(P_\mu)$, which coincides with $\eigcount(\mu)$, for several different values of $\mu$. The number of random vectors is either $s = 1$ or $s = 10$. 
	\cref{fig:example-hutchinson-projector-traceest} confirms the behavior described above. In particular, we see that Hutchinson's estimator with exact quadratic forms has a jump whenever $\mu$ coincides with an eigenvalue of $A$, so it correctly detects all the gaps in the spectrum, although the value of the approximation of $\trace(P_\mu)$ can be quite off, especially with $s = 1$.  
	On the other hand, Hutchinson's estimator combined with the Lanczos algorithm for computing quadratic forms, which is the approximation that we are actually able to compute, has jumps when $\mu$ corresponds to an eigenvalue of one of the projected matrices $T_m^{(i)}$. We see that the Lanczos approximation is able to detect the largest gap quite well, but the other two gaps do not stand out from the other steps in the staircase-like curve. With $s = 10$, the Lanczos curve looks like a staircase with somewhat jagged steps, because the projected matrices $T_m^{(i)}$ have different eigenvalues for different random vectors $\vec x_i$, and the curve is obtained by averaging the approximations~$\quadform_{m, j}^{(i)}$  associated with each $T_m^{(i)}$. \rev{Note that the curves in \cref{fig:example-hutchinson-projector-traceest} are approximations of the (unnormalized) spectral density of $A$, which coincide with the approximations shown, for instance, in \cite[Figure~1]{CTU21}.}
\end{example}

\begin{figure}[t]
	\centering
	\makebox[\linewidth][c]{
		\begin{subfigure}[t]{.50\textwidth}
			\includegraphics[width=\textwidth]{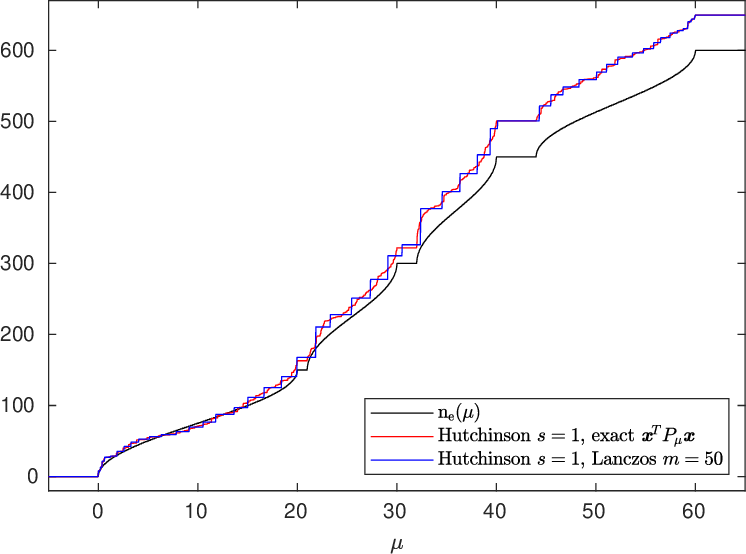}
		\end{subfigure}
		\begin{subfigure}[t]{.50\textwidth}
			\includegraphics[width=\textwidth]{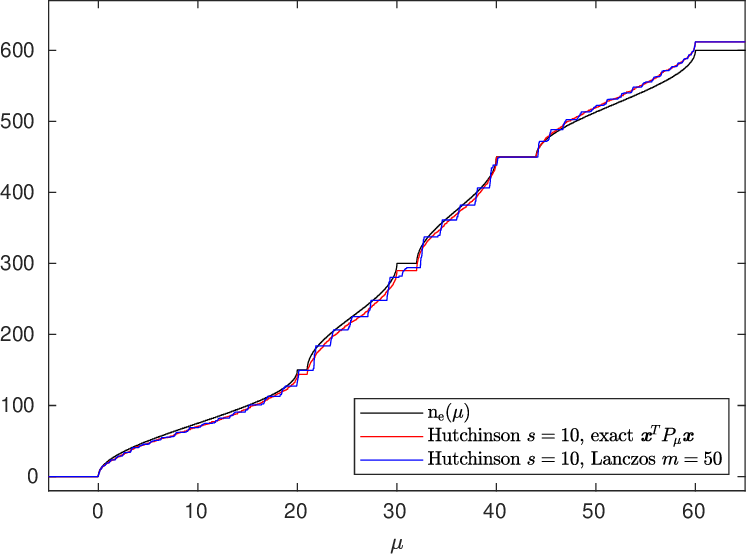}
		\end{subfigure}
		}
	\caption[Approximation of $\trace(P_\mu)$ with Hutchinson's estimator for several $\mu$]{Approximations to $\trace(P_\mu)$ obtained with \cref{algorithm:eigenvalue-gap-finder--basic} compared with exact $\trace(P_\mu)$ and Hutchinson's approximation $\traceH{s}(P_\mu)$ with exact quadratic forms, for several different $\mu$. Left: $s = 1$, $m = 50$. Right: $s = 10$, $m = 50$.   
	\label{fig:example-hutchinson-projector-traceest}}
\end{figure}

\subsection{Trace estimator analysis}
\label{subsec:algorithm-analysis--hutchinson}

In this section we focus on analyzing the approximation of $\trace(P_\mu)$ using Hutchinson's estimator.
The error from the stochastic trace estimator can be bounded with \cref{prop:hutchinson-tail-bound--gaussian}.
Since all the eigenvalues of $P_\mu$ are either $0$ or $1$, we have $\norm{P_\mu}_2 = 1$ and $\norm{P_\mu}_F^2 = \eigcount(\mu)$.
Therefore, in order to have $\prob\big( \abs{\trace(P_\mu) - \trace_p^H(P_\mu)} \ge \varepsilon \big) \le \delta$ with Gaussian vectors, we would need to take
\begin{equation}
	\label{eqn:hutchinson-accuracy--gaussian-cost-requirement}
	s \ge \frac{4}{\varepsilon^2} \big( \eigcount(\mu) + \varepsilon \big) \log(2/\delta).
\end{equation}
Since the exact $\trace(P_\mu)$ only takes integer values, in order to compute it correctly it is sufficient to have a final absolute accuracy smaller than $\frac{1}{2}$ on the trace approximation; if we also account for the error in the Lanczos approximation, it is enough to take $\varepsilon = \frac{1}{4}$.
However, a key issue with this approach is the fact that the required number of sample vectors $s$ grows proportionally to the number~$\eigcount(\mu)$ of eigenvalues below the level $\mu$. This means that in a situation in which a gap is not close to the extrema of the spectrum, such as if $\eigcount(\mu) = n/c$ for some moderate constant $c > 1$, we would have to take~$s = O(n)$, which is too expensive to be practical.

\begin{remark}
	\label{rem:rademacher-vectors}
	In alternative to Gaussian random vectors, we can also use Rademacher vectors, which have random $\pm 1$ entries. As we mentioned in \cref{subsec:hutchinson-trace-estimator}, the convergence rate of Hutchinson's estimator with Rademacher random vectors depends on $\norm{B - \diag(B)}_F^2$ instead of $\norm{B}_F^2$, so it can be faster than with Gaussian vectors in certain scenarios, such as when the entries of $B$ decay away from the diagonal. Here we prefer to employ Gaussian vectors because of the rotational invariance of their distribution, which will prove useful in the following analysis.
\end{remark}

In order to circumvent the growth of the number of sample vectors with $\eigcount(\mu)$, we slightly change our point of view.
Instead of considering the approximation of $\trace(P_\mu)$, let us take two different values $\mu < \mu'$, and consider the difference $\trace(P_{\mu'}) - \trace(P_{\mu})$.
If we use the same sample vectors in Hutchinson's estimator for both $\trace(P_\mu)$ and $\trace(P_{\mu'})$, the difference $\traceH{s}(P_{\mu'}) - \traceH{s}(P_{\mu})$ coincides with the Hutchinson approximation of $\trace(P_{\mu'} - P_{\mu})$. Since we have $\norm{P_{\mu'} - P_{\mu}}_F^2 = \eigcount(\mu') - \eigcount(\mu)$, this means that Hutchinson's estimator can easily approximate the differences in the traces of projectors when $\mu$ and $\mu'$ are close enough that there are only a few eigenvalues of $A$ in the interval $[\mu, \mu']$, even if the exact values of~$\trace(P_\mu)$ and $\trace(P_{\mu'})$ are quite far from the approximations computed by the stochastic estimator. In particular, the heights of the jumps in $\traceH{s}(P_\mu)$ that occur when $\mu$ coincides with an eigenvalue of $A$ should be approximated quite accurately even with a small number of sample vectors.

Let us analyze this phenomenon in more detail. Recall that
\begin{equation*}
	\traceH{s}(P_\mu) = \frac{1}{s} \sum_{i = 1}^s \sum_{j \,:\, \lambda_j < \mu} \abs{\vec u_j^T \vec x_i}^2,
\end{equation*}
so if we take $\mu$ and $\mu'$ such that $\lambda_{k-1} < \mu < \lambda_k < \mu' < \lambda_{k+1}$ for some $k$, we have
\begin{equation*}
	\traceH{s}(P_{\mu'}) - \traceH{s}(P_{\mu}) = \frac{1}{s} \sum_{i = 1}^s \abs{\vec u_k^T \vec x_i}^2.
\end{equation*}
Since the random vectors $\vec x_i$ are independent of the eigenvector $\vec u_k$, we have that $\{ \vec u_k^T \vec x_i \}_{i = 1}^s$ are i.i.d.~$\mathcal{N}(0,1)$ random variables, due to the rotational invariance of Gaussian vectors. As a consequence, the random variable $y_s = \sum_{i = 1}^s \abs{\vec u_k^T \vec x_i}^2$ has a $\chi^2$ distribution with $s$ degrees of freedom, i.e., it is the sum of the squares of $s$ independent unit normal random variables.

We have $\expec[y_s] = s$, which means that the expected height of each jump in $\traceH{s}(P_\mu)$ is equal to~$1$.
Intuitively, the height of a jump in $\traceH{s}(P_\mu)$ is small when the vectors $\vec x_i$ are all approximately orthogonal to the eigenvector $\vec u_k$, an event that has a small chance of occurring.
Since small jumps in $\traceH{s}(P_\mu)$ are more difficult to detect when approximating the quadratic forms in Hutchinson's estimator via the Lanczos method, it will be useful to have an upper bound on the probability of small jumps.
The lemma that follows provides us with such a bound. In the following, we say that the jump in $\traceH{s}(P_\mu)$ at the $k$-th eigenvalue of $A$ is \emph{$\varepsilon$-small} if its height is smaller than $\varepsilon$, i.e., if $y_s \le s \varepsilon$.

\begin{lemma}
	\label[lemma]{lemma:small-jump-probability}
	Given $\delta \in (0, 1)$ and $\varepsilon \in (0,1)$, if \rev{the number of samples $s$ satisfies}
	\begin{equation*}
		s \ge \ceil{\frac{2 \log(1/\delta)}{\log(1/\varepsilon) + \varepsilon - 1}},
	\end{equation*}
	then we have
	\begin{equation*}
		\prob(y_s \le s \varepsilon) \le \delta,
	\end{equation*}
	i.e., the jump at $\lambda_k$ is $\varepsilon$-small with probability at most $\delta$.
\end{lemma}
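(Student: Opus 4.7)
The plan is to recognize that $y_s = \sum_{i=1}^s |\vec u_k^T \vec x_i|^2$ is a sum of squares of $s$ i.i.d.\ standard normal variables, hence $y_s \sim \chi^2_s$, and then apply a standard Chernoff (exponential Markov) bound to its left tail. This reduces the problem to a one-variable optimization whose answer determines precisely when the prescribed $s$ is large enough.

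First I would write down the moment generating function $\expec[e^{-t y_s}] = (1+2t)^{-s/2}$, valid for every $t > 0$. By Markov's inequality applied to the nonnegative random variable $e^{-t y_s}$, I get
\begin{equation*}
    \prob(y_s \le s\varepsilon) = \prob\bigl(e^{-t y_s} \ge e^{-t s \varepsilon}\bigr) \le e^{t s \varepsilon} (1+2t)^{-s/2}
\end{equation*}
for any $t > 0$. Taking the logarithm and differentiating in $t$, the right-hand side is minimized at $t^\star = (1-\varepsilon)/(2\varepsilon)$, which is positive precisely because $\varepsilon \in (0,1)$.

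Plugging this optimal $t^\star$ back in, the exponent collapses to
\begin{equation*}
    \prob(y_s \le s \varepsilon) \le \exp\!\left(-\frac{s}{2}\bigl(\log(1/\varepsilon) + \varepsilon - 1\bigr)\right).
\end{equation*}
Here the quantity $\log(1/\varepsilon) + \varepsilon - 1$ is strictly positive for $\varepsilon \in (0,1)$, as follows from the elementary inequality $\log x < x-1$ for $x \ne 1$ applied at $x = \varepsilon$. Requiring the exponential bound to be at most $\delta$ and solving the resulting inequality for $s$ yields exactly the threshold stated in the lemma; taking the ceiling handles the integrality of $s$.

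I do not expect any real obstacle: the argument is a textbook Chernoff tail estimate for a $\chi^2$ random variable, and the rotational invariance of the Gaussian distribution (already invoked just before the lemma) is what legitimises treating $\{\vec u_k^T \vec x_i\}$ as i.i.d.\ $\mathcal{N}(0,1)$. The only mildly delicate point is verifying that the optimizer $t^\star$ is admissible (positive) and that the function $\log(1/\varepsilon)+\varepsilon-1$ is positive, both of which are immediate from $\varepsilon \in (0,1)$.
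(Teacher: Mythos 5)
Your proof is correct and follows essentially the same Chernoff-bound argument as the paper: Markov's inequality applied to $e^{-ty_s}$, the $\chi^2$ moment generating function $(1+2t)^{-s/2}$, and the choice $t^\star = (1-\varepsilon)/(2\varepsilon)$ (which the paper writes as $\lambda = \tfrac{1}{2\varepsilon} - \tfrac12$). The only cosmetic difference is that you explicitly optimize over $t$ and note positivity of $\log(1/\varepsilon)+\varepsilon-1$, whereas the paper simply plugs in the same value of $\lambda$ without comment.
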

\begin{proof}
	To prove this, we use a Chernoff bound on the lower tail of the probability distribution of $y_s$. 
	For any $\lambda > 0$, we have 
	\begin{equation}
		\label{eqn:proof-chernoff-bound-hutchinson}
		\prob(y_s \le s \varepsilon) = \prob(e^{-\lambda y_s} \ge e^{-\lambda s \varepsilon}) \le \frac{\expec[e^{-\lambda y_s}]}{e^{-\lambda s \varepsilon}},
	\end{equation}
	where the last inequality follows from Markov's inequality. Since $y_s$ is a $\chi^2$ random variable with $s$ degrees of freedom, we can write $y_s = X_1^2 + \dots + X_s^2$, where $\{X_j\}_{j = 1}^s$ are i.i.d.~$\mathcal{N}(0,1)$ random variables. 
	We have
	\begin{equation*}
		\expec[e^{-\lambda X_j^2}] = \frac{1}{\sqrt{2 \pi}} \int_{\R} e^{-\lambda z^2} e^{-\frac{1}{2} z^2}\de z = \frac{1}{\sqrt{2 \pi}} \int_{\R} \frac{1}{\sqrt{1+2\lambda}} e^{-\frac{1}{2} y^2}\de y = \frac{1}{\sqrt{1 + 2\lambda}},
	\end{equation*}
	where we used the change of variables $y = z \, \sqrt{1+2\lambda}$. It follows that
	\begin{equation*}
		\expec[e^{-\lambda y_s}] = \expec\bigg[\prod_{j = 1}^s e^{-\lambda X_j^2}\bigg] = \expec[e^{-\lambda X_1^2}]^s = (1+2\lambda)^{-s/2},
	\end{equation*}
	and by combining this identity with \cref{eqn:proof-chernoff-bound-hutchinson} we obtain
	\begin{equation}
		\label{proof-chernoff-bound-hutchinson-2}
		\prob(y_s \le s \varepsilon) \le e^{\lambda s \varepsilon} (1+2\lambda)^{-s/2} \qquad \forall \lambda > 0.
	\end{equation} 
	Taking $\lambda = \frac{1}{2 \varepsilon} - \frac{1}{2}$, we get
	\begin{equation*}
		\prob(y_s \le s \varepsilon) \le \big(\varepsilon e^{1 - \varepsilon}\big)^{s/2}.
	\end{equation*}
	With some simple algebraic manipulations, we conclude that $\prob(y_s \le s \varepsilon)\le \delta$ provided that we take
	\begin{equation*}
		s \ge \frac{2 \log(1/\delta)}{\log(1/\varepsilon) + \varepsilon - 1}.
	\end{equation*}
	\rev{This concludes the proof, since the number of samples $s$ is an integer.}
\end{proof}

Note that $\prob(y_s \le s \varepsilon)$ is the probability that a jump in $\traceH{s}(P_\mu)$ corresponding to a certain fixed eigenvalue of $A$ is $\varepsilon$-small, \rev{and we can use \cref{lemma:small-jump-probability} to obtain conditions on $s$ that bound the probability of having $\varepsilon$-small jumps associated with any set of eigenvalues of $A$}. \rev{This probability can be either bounded with a union bound argument, or computed directly by observing} that the heights of different jumps in $\traceH{s}(P_\mu)$ are independent, because the quantities $\{\vec u_k^T \vec x_i\}_{i, k}$ are all independent due to the rotational invariance of the $\vec x_i$ and the orthogonality of the $\vec u_k$.
The number of sample vectors $s$ required to have an $\varepsilon$-small jump probability lower than $\delta$ reduces as $\varepsilon$ decreases. At the same time, the number of Lanczos iterations required to approximate the quadratic forms with accuracy of order $\varepsilon$ (and thus manage to detect the jumps in $\traceH{s}(P_\mu))$ increases as~$\varepsilon$ becomes small. In \cref{subsec:algorithm-analysis--lanczos} we obtain some bounds on the Lanczos error, which we will later compare with the $\varepsilon$-small jump probability bound from \cref{lemma:small-jump-probability} in order to determine the best choice of parameters for the algorithm.

\subsection{Lanczos approximation analysis}
\label{subsec:algorithm-analysis--lanczos}

In this section we discuss the error in the Lanczos approximation of a quadratic form $\vec x^T h_\mu(A) \vec x$. Recall that we denote by
\begin{equation*}
	E_k(f, \mathcal{S})=\min_{\deg p \leq k} \max_{z\in \mathcal{S}} |f(z)-p(z)|
\end{equation*}
the best uniform norm error in the approximation of a continuous function $f$ on the compact set $\mathcal{S} \subset \R$ with polynomials of degree up to~$k$.
If $\quadform_m$ denotes the Lanczos approximation to~$\vec x^T f(A) \vec x$ from the Krylov subspace $\kryl_m(A, \vec x)$, we have
\begin{equation}
	\label{eqn:lanczos-matfun-approx--basic-bound}
	\abs{\vec x^T f(A) \vec x - \quadform_m} \le 2 \norm{\vec x}_2^2 E_{2m-1}(f, \spec(A) \cup \spec(T_m)),
\end{equation}
see, for instance, \cite[eq.~(1.2)]{CGMM21}.
Since the function $f = h_\mu$ is discontinuous at $\mu$, we have that $E_{2m-1}(h_\mu, [a, b])$ is always at least $\frac{1}{2}$ if $\mu \in [a, b]$, so we cannot derive meaningful error bounds if we simply consider an interval $[a, b]$ that contains the whole spectrum of $A$. On the other hand, we can obtain bounds on the polynomial approximation error of $h_\mu$ on two disjoint intervals, one to the left and one to the right of~$\mu$.
We start by stating a result for the polynomial approximation of the sign function on the union of two symmetric intervals $[-b,-a]\cup[a,b]$.
\begin{proposition}[{\hspace{1sp}\cite[Theorem~1]{EremenkoYuditskii07}}]
	\label[proposition]{prop:sign-polynomial-approx-bound--hasson}
	Let $0 < a < b$. We have
	\begin{equation*}
		\lim_{m \to \infty} \sqrt{m} \bigg(\frac{b +  a}{b - a}\bigg)^m E_{2m+1}(\sign, [-b, -a] \cup [a, b]) = \frac{b - a}{\sqrt{\pi ab}}.
	\end{equation*}
\end{proposition}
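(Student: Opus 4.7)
The plan is to exploit the symmetry of the problem to reduce it to a single-interval weighted polynomial approximation, to read off the exponential rate via conformal mapping, and finally to pin down the sharp prefactor by a delicate asymptotic analysis of the extremal polynomial. First I would use the oddness of $\sign$ together with the symmetry of $[-b,-a]\cup[a,b]$ about the origin: a standard averaging argument (replace $p(x)$ by $(p(x)-p(-x))/2$) shows that one may restrict to odd polynomials $p(x)=x\,q(x^2)$ with $\deg q\le m$. Evaluating the uniform error reduces to $\max_{x\in[a,b]}|1-x\,q(x^2)|$, and the substitution $y=x^2$ converts the problem to one on a single interval,
\begin{equation*}
E_{2m+1}\bigl(\sign,[-b,-a]\cup[a,b]\bigr)=\min_{\deg q\le m}\max_{y\in[a^2,b^2]}\bigl|1-\sqrt{y}\,q(y)\bigr|,
\end{equation*}
which is equivalent to best weighted polynomial approximation of $y^{-1/2}$ on $[a^2,b^2]$ with weight $\sqrt{y}$.

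Next I would extract the exponential rate via conformal mapping. After affinely sending $[a^2,b^2]$ onto $[-1,1]$, the branch point $y=0$ of $y^{-1/2}$ is mapped to $t_0=-(b^2+a^2)/(b^2-a^2)$ outside $[-1,1]$, with Joukowski parameter
\begin{equation*}
\rho = |t_0| + \sqrt{t_0^2-1} = \frac{b^2+a^2}{b^2-a^2} + \frac{2ab}{b^2-a^2} = \frac{b+a}{b-a},
\end{equation*}
which is precisely the reciprocal of the base appearing in the statement. Classical Bernstein--Walsh theory then yields an exponential upper bound $E_{2m+1}=O(\rho^{-m})$, together with a matching exponential lower bound coming from the unremovable singularity of $y^{-1/2}$ at $y=0$.

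The last and hardest step is to refine this exponential rate into the sharp prefactor $(b-a)/\sqrt{\pi a b}$ together with the $m^{-1/2}$ correction. Both originate from the Jacobi-type algebraic singularity of the weight $\sqrt{y}$ at the endpoint $y=a^2$, which is the endpoint of $[a^2,b^2]$ closest to the source of non-analyticity at $y=0$. To make this quantitative I would characterise the extremal polynomial by its equioscillation property (exactly $2m+3$ alternation points on $[-b,-a]\cup[a,b]$), then represent it through a generating function built from Chebyshev-type polynomials associated with the two symmetric intervals, and finally extract the asymptotic prefactor by a Laplace/steepest-descent analysis localised near $x=\pm a$; an alternative route would be a Riemann--Hilbert analysis of the weighted orthogonal polynomials naturally attached to this extremal problem.

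The principal obstacle is precisely this last step. The exponential decay rate $\bigl((b-a)/(b+a)\bigr)^m$ follows essentially from conformal mapping and is comparatively soft; by contrast, establishing the sharp constant $(b-a)/\sqrt{\pi ab}$ and the $1/\sqrt{m}$ algebraic correction requires a boundary-layer analysis near the inner endpoints $x=\pm a$ and is the technical core of the Eremenko--Yuditskii proof, which cannot be replaced by purely potential-theoretic arguments.
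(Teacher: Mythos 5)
The paper does not prove this proposition---it cites \cite[Theorem~1]{EremenkoYuditskii07} verbatim and uses it as a black box, so there is no ``paper's own proof'' to compare your attempt against. Your preliminary reductions are nonetheless sound: symmetrizing to odd polynomials, substituting $y=x^2$ to obtain a weighted one-interval problem, and computing the Joukowski parameter $\rho=(b+a)/(b-a)$ are all correct and do isolate the exponential rate $\big((b-a)/(b+a)\big)^m$.

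The genuine gap is the decisive step: the sharp prefactor $(b-a)/\sqrt{\pi ab}$ together with the $m^{-1/2}$ correction is only gestured at, not carried out, and you acknowledge this yourself. Moreover, the mechanism you propose for it is mistaken. The weight $\sqrt{y}$ is \emph{analytic} on $[a^2,b^2]$ (since $a>0$), so there is no ``Jacobi-type algebraic singularity of the weight at the endpoint $y=a^2$.'' The $m^{-1/2}$ factor and the constant arise from the square-root branch point of $y^{-1/2}$ at the \emph{exterior} point $y=0$ (equivalently, from the jump of $\sign$ across the gap $(-a,a)$ of the original two-interval set); extracting the precise constant is the delicate asymptotic core of the Eremenko--Yuditskii argument, and the proposal neither supplies it nor correctly attributes its source.
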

The statement of \cref{prop:sign-polynomial-approx-bound--hasson} implies that there exists $C > 0$ such that
\begin{equation*}
	E_{2m+1}(\sign, [-b, -a] \cup [a, b]) \le \frac{C}{\sqrt{m}} \bigg(\frac{b -  a}{b + a}\bigg)^m,
\end{equation*}
and for $m$ large enough we can take
\begin{equation*}
	C = \frac{b - a}{\sqrt{\pi ab}} + 1.
\end{equation*}
\begin{remark}
	In a situation in which the spectrum of $A$ is not symmetric with respect to the origin, using the bound in \cref{prop:sign-polynomial-approx-bound--hasson} to bound the error of the Lanczos method for the sign function can yield overly pessimistic results, since it would require us to consider intervals that are much wider than~$\spec(A)$.
	In such a case, it would be more convenient to consider the polynomial approximation of the sign function on a nonsymmetric set. 
	For instance, the asymptotics of $E_{2k+1}(\sign, [-a, -1] \cup [1, b])$ for~$a \ne b$ have been studied in \cite{EremenkoYuditskii11}, where the authors generalize \cref{prop:sign-polynomial-approx-bound--hasson} to \cite[Theorem~1.1]{EremenkoYuditskii11}. The result in \cite{EremenkoYuditskii11} could be used to obtained more refined error bounds for the Lanczos approximation error, but the complexity of the statement, which involves Green's functions and integral representations, makes it significantly more difficult to employ in our setting.
\end{remark}

We can easily turn \cref{prop:sign-polynomial-approx-bound--hasson} into a bound for the step function $h_\mu$ for any~$\mu \in \R$. Let us define
\begin{equation*}
	b_1 := \lambda_1 - \mu,  \qquad 	a_1 := \lambda_{\eigcount(\mu)} - \mu, 
	\qquad a_2 := \lambda_{\eigcount(\mu) + 1} - \mu, \qquad b_2 := \lambda_n - \mu,
\end{equation*}
and let
\begin{equation*}
	a := \min\{ \abs{a_1}, \abs{a_2} \}
	\qquad \text{and} \qquad
	b := \max \{ \abs{b_1}, \abs{b_2} \}.
\end{equation*}
Note that $\spec(A) \subset[\lambda_1, \lambda_{\eigcount(\mu)}] \cup [\lambda_{\eigcount(\mu) + 1}, \lambda_n]$ and recall that $h_\mu(x) = -\frac{1}{2} \sign(x - \mu) + \frac{1}{2}$, so we have
\begin{align*}
	E_{2m+1}(h_\mu, [\lambda_1, \lambda_{\eigcount(\mu)}] \cup [\lambda_{\eigcount(\mu) + 1}, \lambda_n]) & = E_{2m+1}(h_0, [b_1, a_1] \cup [a_2, b_2]) \\
	& = \frac{1}{2} E_{2m+1}(\sign, [b_1, a_1] \cup [a_2, b_2]) \\
	& \le \frac{1}{2} E_{2m+1}(\sign, [-b, -a] \cup [a, b])  \\
	& \le \frac{C}{2\sqrt{m}} \bigg(\frac{b - a}{b + a}\bigg)^{m},
\end{align*}
with $C = 1 \,+\, (b - a)/\sqrt{\pi ab}$, where we used \cref{prop:sign-polynomial-approx-bound--hasson} for the last inequality.
Denoting the relative spectral gap associated to $\mu$ by $\theta := a / b$, we obtain
\begin{equation}
	\label{eqn:stepfunction-polynomial-approximation-bound}
	E_{2m+1}(h_\mu, [\lambda_1, \lambda_{\eigcount(\mu)}] \cup [\lambda_{\eigcount(\mu) + 1}, \lambda_n]) \le \frac{C}{2\sqrt{m}} \bigg(\frac{1 - \theta}{1 + \theta}\bigg)^{m}.
\end{equation}
We can combine this result with \cref{eqn:lanczos-matfun-approx--basic-bound} to bound the error in the Lanczos approximation of quadratic forms with $P_\mu = h_\mu(A)$, provided that no eigenvalue of $T_m$ lies within the gap $(\lambda_{\eigcount(\mu)}, \lambda_{\eigcount(\mu) + 1})$.
Unfortunately, \rev{we cannot exclude a priori that an eigenvalue of $T_m$ lies within the gap, and indeed this often happens in practice}; however, since there must be at least one eigenvalue of~$A$ in the open interval between two consecutive Ritz values \cite[Section~4]{Kuijlaars06}, there can be at most one eigenvalue of $T_m$ in the interval $(\lambda_{\eigcount(\mu)}, \lambda_{\eigcount(\mu)+1})$. This fact can be proved using the theory of orthogonal polynomials, see, for instance, \cite[Theorem~1.21]{Gautschi04}, using the fact that the Ritz values are the zeros of an orthogonal polynomial associated with a Gauss quadrature rule; see, e.g., \cite[Theorem~6.2]{GolubMeurant10}.
In the analysis that follows, we assume for simplicity that there are no Ritz values in the gap $(\lambda_{\eigcount(\mu)}, \lambda_{\eigcount(\mu) + 1})$. We discuss how we can modify our approach to deal with this issue in \cref{rem:ritz-value-in-gap}.

\begin{remark}
	\label{rem:krylov-subspace-A2}
	In the literature, several techniques have been used to obtain approximations to the sign matrix function that circumvent the problem of Ritz values getting arbitrarily close to zero \cite{EFLSV02}, for instance by writing $\sign(z) = z \cdot (z^2)^{-1/2}$ and constructing an approximation of $(A^2)^{-1/2}\vec x$ using the Krylov subspace $\kryl_m(A^2, \vec x)$, which ensures that the eigenvalues of the projected matrix are bounded away from $0$, since $A^2$ is positive definite as long as $A$ is nonsingular. Alternatively, one can construct an approximation using harmonic Ritz values \cite{PPV95}, which are always outside of the interval between the largest negative and the smallest positive eigenvalue of $A$, so they avoid the discontinuity of the sign function.
	Since in this work our goal is to approximate $\vec x^T h_\mu(A) \vec x$ for several $\mu$, we opted to use the standard Lanczos approximation~\cref{eqn:quadform-lanczos-approximation}, for which it is very cheap and straightforward to compute approximations for many different functions.
\end{remark}

Combining \cref{eqn:lanczos-matfun-approx--basic-bound} with \cref{eqn:stepfunction-polynomial-approximation-bound}, we obtain the following bound on the Lanczos approximation error.
\begin{proposition}
	\label[proposition]{prop:lanczos-matfun-approx--apriori-bound}
	Let $\quadform_m$ be the Lanczos approximation \cref{eqn:quadform-lanczos-approximation} to $\vec x^T h_\mu(A) \vec x$ from the Krylov subspace $\kryl_m(A, \vec x)$, and assume that there is no eigenvalue of $T_m$ in the interval $(\lambda_{\eigcount(\mu)}, \lambda_{\eigcount(\mu)+1})$. We have
	\begin{equation}
		\label{eqn:lanczos-matfun-approx--gap-bound-noritzval}
		\abs{\vec x^T h_\mu(A) \vec x - \quadform_m} \le \frac{C_\rev{\theta}\norm{\vec x}_2^2}{\sqrt{m-1}} \bigg( \frac{1 - \theta}{1 + \theta} \bigg)^{m-1},
	\end{equation}
	where $\theta$ is defined above \rev{and $C_\theta = 1 + (1-\theta)/\sqrt{\pi \theta}$}.
\end{proposition}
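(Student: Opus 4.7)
The statement follows by chaining the two estimates already established above, so my plan is to spell out the pieces and verify they fit together with the correct exponents.

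First, I would invoke the generic Lanczos bound (3.1) with $f = h_\mu$, which gives
\[
\abs{\vec x^T h_\mu(A) \vec x - \quadform_m} \;\le\; 2 \norm{\vec x}_2^2 \, E_{2m-1}\bigl(h_\mu,\,\spec(A) \cup \spec(T_m)\bigr).
\]
The task then reduces to controlling the best polynomial approximation error of $h_\mu$ on $\spec(A) \cup \spec(T_m)$. This is where the standing assumption on Ritz values enters: since the Ritz values lie in $[\lambda_1,\lambda_n]$ (they are convex combinations of eigenvalues of $A$), and by hypothesis none of them falls inside the open gap $(\lambda_{\eigcount(\mu)},\lambda_{\eigcount(\mu)+1})$, we get
\[
\spec(A) \cup \spec(T_m) \;\subset\; [\lambda_1,\lambda_{\eigcount(\mu)}] \cup [\lambda_{\eigcount(\mu)+1},\lambda_n].
\]
Hence the polynomial approximation error on $\spec(A)\cup\spec(T_m)$ is bounded by the error on the two-interval set on the right-hand side, for which we already have the explicit bound (3.2).

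Next, I would apply (3.2) with the index shifted so that the polynomial degree is $2m-1$ rather than $2m+1$: replacing $m$ by $m-1$ in (3.2) yields
\[
E_{2m-1}\bigl(h_\mu, [\lambda_1,\lambda_{\eigcount(\mu)}] \cup [\lambda_{\eigcount(\mu)+1},\lambda_n]\bigr) \;\le\; \frac{C}{2\sqrt{m-1}} \left( \frac{1-\theta}{1+\theta} \right)^{m-1},
\]
with $C = 1 + (1-\theta)/\sqrt{\pi\theta}$ (which is exactly the constant obtained by substituting $a/b = \theta$ into $1+(b-a)/\sqrt{\pi ab}$). Substituting this into the Lanczos bound and canceling the factor of $2$ gives precisely the claimed inequality with $C_\theta = 1 + (1-\theta)/\sqrt{\pi\theta}$.

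The proof has no real obstacle beyond being careful about two things: the degree shift from $2m+1$ to $2m-1$ (so that the factor $1/\sqrt{m-1}$ appears rather than $1/\sqrt{m}$), and the monotonicity argument that lets us replace $\spec(A) \cup \spec(T_m)$ by the larger two-interval set, which in turn hinges on the hypothesis that no Ritz value falls inside the spectral gap. I would briefly remark that the interlacing property quoted earlier (at most one Ritz value can lie in the gap) means the excluded scenario is a genuinely isolated failure mode, setting up the discussion in \cref{rem:ritz-value-in-gap}.
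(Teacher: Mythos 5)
Your proof is correct and follows exactly the path the paper implicitly takes (the paper simply states that \cref{prop:lanczos-matfun-approx--apriori-bound} is obtained by combining \cref{eqn:lanczos-matfun-approx--basic-bound} with \cref{eqn:stepfunction-polynomial-approximation-bound}). You are careful about the two details that matter---the degree shift $m \mapsto m-1$ and the containment $\spec(A) \cup \spec(T_m) \subset [\lambda_1,\lambda_{\eigcount(\mu)}] \cup [\lambda_{\eigcount(\mu)+1},\lambda_n]$ under the no-Ritz-value-in-gap hypothesis---and the constant $C_\theta$ is derived correctly from $\theta = a/b$.
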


Note that if we take a different value of $\mu$ in the interval $(\lambda_{\eigcount(\mu)}, \lambda_{\eigcount(\mu) + 1})$, the error $\abs{\vec x^T h_\mu(A) \vec x - \quadform_m}$ remains the same, because the function $h_\mu$ takes the same values on the eigenvalues of $A$ and~$T_m$ and hence the matrix functions $h_\mu(A)$ and $h_\mu(T_m)$ do not change. On the other hand, the bound in \cref{prop:lanczos-matfun-approx--apriori-bound} depends on $\theta$, which in turn depends on $\mu$. It is quite easy to see that \rev{the sharpest bound in \cref{eqn:lanczos-matfun-approx--gap-bound-noritzval} is obtained when $\theta$ is as large as possible, which occurs} when $\mu$ is precisely in the middle of the gap, i.e. $\mu = \frac{1}{2}(\lambda_{\eigcount(\mu)} + \lambda_{\eigcount(\mu)+1})$. 

The convergence predicted by \cref{prop:lanczos-matfun-approx--apriori-bound} is at least exponential with rate~$(1-\theta)/(1+\theta)$, with in addition an $O(1/\sqrt{m})$ factor which is independent of the gap width. Note that $C_\rev{\theta}$ grows as the gap becomes smaller; see \cref{prop:sign-polynomial-approx-bound--hasson}. 
In order to have an error $\abs{\vec x^T h_\mu(A) \vec x - \quadform_m} \le \varepsilon$, the number of Lanczos iterations $m$ should satisfy
\begin{equation}
	\label{eqn:lanczos-accuracy--cost-requirement}
	m + \frac{\log(m-1)}{2\log\big((1+\theta)/(1-\theta)\big)}  \ge 1 + \frac{\log\big(C_\rev{\theta} \norm{\vec x}_2^2 / \varepsilon\big)}{\log\big((1+\theta)/(1-\theta)\big)}.
\end{equation}
Assuming that the relative gap $\theta$ is large enough, we do not lose much by discarding the $\log(m-1)$ term, and we can take
\begin{equation}
	\label{eqn:lanczos-accuracy--largegap-cost-requirement}
	m \ge 1 + \frac{\log\big(C_\rev{\theta} \norm{\vec x}_2^2 / \varepsilon\big)}{\log\big((1+\theta)/(1-\theta)\big)}.
\end{equation}
On the other hand, in a situation where the spectral gap is very small, the Lanczos method effectively converges like $O(1/\sqrt{m})$. Note that if $\vec x$ is a Gaussian random vector we have $\expec[\norm{\vec x}_2^2] = n$, so with a very small gap for which $\theta \approx 0$ we would need to take
\begin{equation*}
	m \gtrsim \frac{C_\rev{\theta}^2 n^2}{\varepsilon^2},
\end{equation*}
which is definitely unfeasible. This means that when using the Lanczos method we can only hope to approximate $\vec x^T h_\mu(A) \vec x$ accurately when $\mu$ is inside a relatively large gap in the spectrum of $A$, and we should instead expect to have quite a large error whenever $\mu$ is close to an eigenvalue of $A$. 

A crucial requirement for using the bound in \cref{prop:lanczos-matfun-approx--apriori-bound} is the knowledge of the relative gap width $\theta$. Since the gaps in the spectrum are not known to us in advance, we cannot use this result to check if the error of the Lanczos method is below a certain tolerance $\varepsilon$.
On the other hand, \cref{prop:lanczos-matfun-approx--apriori-bound} can be useful to choose the number $m$ of Lanczos iterations at the start of the algorithm. 
Indeed, if our goal is to find all the gaps in the spectrum of~$A$ with relative width at least~$\theta$, for any given $\varepsilon$ we can select $m$ using \cref{eqn:lanczos-accuracy--largegap-cost-requirement} to ensure that the error $\abs{\vec x^T h_\mu(A) \vec x - \quadform_m} \le \varepsilon$ whenever~$\mu$ is inside a gap with width at least $\theta$. However, we are still unable to determine for which values of~$\mu$ this condition is satisfied.
The final component that we need to make the algorithm work is an a posteriori bound on the error, which will allow us to detect when $\abs{\vec x^T h_\mu(A) \vec x - \quadform_m} \le \varepsilon$, without knowing the gaps in the spectrum in advance. For this purpose we can use an a posteriori error bound from \cite{BRS23}, which we adapt to the step function $h_\mu$ in \cref{subsec:lanczos-a-posteriori-bound}.

\begin{remark}
	\label[remark]{rem:ritz-value-in-gap}
	If there is an eigenvalue of $T_m$ inside the gap $(\lambda_{\eigcount(\mu)}, \lambda_{\eigcount(\mu) + 1})$, the results above do not hold. \rev{However, it is still possible to detect that there is a gap in the spectrum of $A$.} \rev{Indeed, let us assume that the gap $(\lambda_{\eigcount(\mu)}, \lambda_{\eigcount(\mu) + 1})$ has relative width $\theta$ and that there is exactly one eigenvalue $\tau$ of $T_m$ in the gap. Then, the gap is split into the two smaller gaps $(\lambda_{\eigcount(\mu)}, \tau)$ and $(\tau, \lambda_{\eigcount(\mu)})$, and one of then has a relative width at least $\theta/2$.} In other words, there is an interval of relative width~$\theta/2$ where there are no eigenvalues of $A$ or $T_m$, and hence when $\mu$ is inside that interval the Lanczos approximation error satisfies the bound \cref{eqn:lanczos-matfun-approx--gap-bound-noritzval} with $\theta$ replaced by~$\theta/2$. \rev{In order to ensure that this gap is detected, it is then enough to use $\theta/2$ instead of $\theta$ as a target gap width for our algorithm. We remark that with this approach we are only able to detect the larger portion of the gap, and not the whole gap in $\spec(A)$; however, the accuracy in the approximation of the gap can be refined for a small computational cost by simply running a few extra Lanczos iterations, which would cause the eigenvalue of $T_m$ to move around the gap or, even better, to disappear from the gap.}
\end{remark}

\subsection{An a posteriori error bound for the Lanczos algorithm}
\label{subsec:lanczos-a-posteriori-bound}

In this section we adapt an a posteriori error bound given in~\cite{BRS23} for the approximation of~$\vec x^T f(A) \vec x$ with a rational Krylov method to the case of the Lanczos approximation of $f = h_\mu$. 
The result given in \cite[Theorem~4.8]{BRS23} employs the Cauchy integral formula and the residue theorem to obtain upper and lower bounds on the error in the approximation of $\vec x^T f(A) \vec x$ with a rational Krylov method. This result can be easily used to bound the error in the Lanczos approximation for $f = h_\mu$, but it requires some caution since the function~$h_\mu$ is discontinuous at $\mu$. In the following, we briefly retrace the proof of \cite[Theorem~4.8]{BRS23} and adapt it to the current setting. 

If $\Gamma \subset \C$ is a simple closed curve that surrounds the eigenvalues of $A$ below $\mu$, we have
\begin{equation*}
	P_\mu = h_\mu(A) = \frac{1}{2 \pi i} \int_\Gamma (z I - A)^{-1} \de z.
\end{equation*}
If the eigenvalues of $T_m$ that are smaller than $\mu$ also lie in the interior of $\Gamma$, we have
\begin{equation*}
	h_\mu(A) \vec x - \norm{\vec x}_2 V_m h_\mu(T_m) \vec e_1 = \frac{1}{2 \pi i} \int_\Gamma (z I - A)^{-1} \vec r_m(z) \de z,
\end{equation*}
where
\begin{equation*}
	\vec r_m(z) = \vec x - (z I - A) \vec x_m(z), \qquad \text{with} \qquad \vec x_m(z) = \norm{\vec x}_2 V_m (z I - T_m)^{-1} \vec e_1,
\end{equation*}
that is, $\vec r_m(z)$ is the residual of a shifted linear system after $m$ iterations of the Lanczos method. With the same algebraic manipulations used in \cite[Section~4.3]{BRS23}, we obtain
\begin{equation*}
	\vec x^T h_\mu(A) \vec x - \norm{\vec x}_2^2 \vec e_1^T h_\mu(T_m) \vec e_1 = \frac{\norm{\vec x}_2^2}{2 \pi i} \int_\Gamma \varphi_m(z)^2 \vec v_{m+1}^T (z I -A)^{-1} \vec v_{m+1} \de z,
\end{equation*}
with
\begin{equation*}
	\varphi_m(z) = t_{m+1, m} \vec e_m^T (z I - T_m)^{-1} \vec e_1,
\end{equation*}
where $t_{m+1, m}$ denotes the element in position $(m+1, m)$ of the matrix $\underline{T}_m$.
Using an eigenvalue decomposition $T_m = U_m D_m U_m^T$, with $U_m$ orthogonal and $D_m = \diag(\theta_1, \dots, \theta_m)$, we can define
\begin{equation*}
	\begin{bmatrix}
		\alpha_1, \dots, \alpha_m
	\end{bmatrix} := t_{m+1,m} \vec e_m^T U_m, \qquad \begin{bmatrix}
		\beta_1, \dots, \beta_m
	\end{bmatrix}^T := U_m^T \vec e_1
\end{equation*}
and 
\begin{equation*}
	\gamma_j := \sum_{k \,:\, k \ne j} \alpha_k \beta_k \frac{1}{\theta_j - \theta_k}, \qquad j = 1, \dots, m.
\end{equation*}
With these definitions, the function $\varphi_m(z)^2$ can be rewritten in the form
\begin{equation*}
	\varphi_m(z)^2 = \sum_{j = 1}^m \alpha_j^2 \beta_j^2 \frac{1}{(z - \theta_j)^2} + 2 \sum_{j = 1}^m \alpha_j \beta_j \gamma_j \frac{1}{z - \theta_j}.
\end{equation*}
Using the residue theorem, with the same procedure used in \cite[Section~4.3]{BRS23} we obtain
\begin{equation}
	\label{eqn:lanczos-error--residue-formulation}
	\vec x^T h_\mu(A) \vec x - \norm{\vec x}_2^2 \vec e_1^T h_\mu(T_m) \vec e_1 = \norm{\vec x}_2^2 \vec v_{m+1}^T g_m(A) \vec v_{m+1},
\end{equation}
with
\begin{equation}
	\label{eqn:lanczos-post-bound--gm-definition}
	g_m(z) := \begin{cases}
		\dsum_{j \,:\, \theta_j > \mu} \alpha_j^2 \beta_j^2 \dfrac{1}{(z - \theta_j)^2} + 2 \alpha_j \beta_j \gamma_j \dfrac{1}{z - \theta_j}   & \text{if $z < \mu$,} \\
		- \dsum_{j \,:\, \theta_j < \mu} \alpha_j^2 \beta_j^2 \dfrac{1}{(z - \theta_j)^2} - 2 \alpha_j \beta_j \gamma_j \dfrac{1}{z - \theta_j} & \text{if $z > \mu$.}
	\end{cases}
\end{equation}
We can use \cref{eqn:lanczos-error--residue-formulation} to obtain the following upper bound for the Lanczos approximation error, which corresponds to the upper bound in \cite[Theorem~4.8]{BRS23}.

\begin{proposition}
	\label[proposition]{prop:lanczos-a-posteriori-error-bound}
	Let $\quadform_m$ be the Lanczos approximation \cref{eqn:quadform-lanczos-approximation} to $\vec x^T h_\mu(A) \vec x$ from the Krylov subspace $\kryl_m(A, \vec x)$. We have
	\begin{equation*}
		\abs{\vec x^T h_\mu(A) \vec x - \quadform_m} \le \norm{\vec x}_2^2 \max_{z \in \spec(A)} {\abs{g_m(z)}} \le \norm{\vec x}_2^2 \max_{z \in [\lambda_{\min}, \lambda_{\max}]} {\abs{g_m(z)}},
	\end{equation*}
	where $g_m$ is defined in \cref{eqn:lanczos-post-bound--gm-definition}.
\end{proposition}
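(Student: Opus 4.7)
The plan is to take the identity already derived in the lead-up to the statement, namely
\begin{equation*}
	\vec x^T h_\mu(A) \vec x - \norm{\vec x}_2^2 \vec e_1^T h_\mu(T_m) \vec e_1 = \norm{\vec x}_2^2 \vec v_{m+1}^T g_m(A) \vec v_{m+1},
\end{equation*}
and bound the right-hand side in absolute value using the spectral theorem. First I would take moduli on both sides and factor out $\norm{\vec x}_2^2$, so that the task reduces to bounding the quadratic form $\abs{\vec v_{m+1}^T g_m(A) \vec v_{m+1}}$.

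Next I would observe that $\vec v_{m+1}$ is a unit vector (it is a Lanczos basis vector), and use the eigenvalue decomposition $A = \sum_{i=1}^n \lambda_i \vec u_i \vec u_i^T$ to expand
\begin{equation*}
	\vec v_{m+1}^T g_m(A) \vec v_{m+1} = \sum_{i=1}^n g_m(\lambda_i) \, \abs{\vec u_i^T \vec v_{m+1}}^2.
\end{equation*}
Since $\sum_{i=1}^n \abs{\vec u_i^T \vec v_{m+1}}^2 = \norm{\vec v_{m+1}}_2^2 = 1$, the right-hand side is a convex combination of the values $g_m(\lambda_i)$, and hence its absolute value is bounded by $\max_{z \in \spec(A)} \abs{g_m(z)}$. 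This yields the first inequality. The second inequality is immediate from $\spec(A) \subset [\lambda_{\min}, \lambda_{\max}]$ together with monotonicity of the maximum.

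The only subtle point, which is worth a brief justification, is that $g_m$ is well-defined on $\spec(A)$ and on $[\lambda_{\min}, \lambda_{\max}]$: the poles of $g_m$ are located at the Ritz values $\theta_j$, and the definition in \cref{eqn:lanczos-post-bound--gm-definition} uses only those $\theta_j$ on the side of $\mu$ opposite to $z$. Under the standing assumption that no Ritz value lies in the gap $(\lambda_{\eigcount(\mu)}, \lambda_{\eigcount(\mu)+1})$, every $\theta_j < \mu$ is at most $\lambda_{\eigcount(\mu)}$ and every $\theta_j > \mu$ is at least $\lambda_{\eigcount(\mu)+1}$, so $g_m$ is continuous on each of $(-\infty,\mu)$ and $(\mu,\infty)$ around the spectrum, and the maxima in the statement are finite. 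No real obstacle arises — the proposition is essentially a direct corollary of the residue-based identity combined with the spectral theorem, and reuses the derivation of \cite[Theorem~4.8]{BRS23} verbatim once the piecewise definition of $g_m$ tailored to the discontinuity of $h_\mu$ has been set up.
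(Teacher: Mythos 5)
Your proof matches the paper's argument: the paper derives the identity $\vec x^T h_\mu(A) \vec x - \quadform_m = \norm{\vec x}_2^2\, \vec v_{m+1}^T g_m(A) \vec v_{m+1}$ in \cref{eqn:lanczos-error--residue-formulation}, and the proposition follows by exactly the step you spell out — expand $\vec v_{m+1}^T g_m(A) \vec v_{m+1}$ in the eigenbasis of $A$, note that the weights $\abs{\vec u_i^T \vec v_{m+1}}^2$ sum to one since $\vec v_{m+1}$ is a unit Lanczos vector, and bound the resulting convex combination by the maximum of $\abs{g_m}$ over $\spec(A)$. In fact the paper displays the same convex-combination identity immediately after the proposition (to explain why the \emph{lower} bound from \cite[Theorem~4.8]{BRS23} fails here), so your route and the paper's coincide.

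One small inaccuracy in your closing remark: the well-definedness of $g_m$ on $\spec(A)$ (and on all of $[\lambda_{\min},\lambda_{\max}]\setminus\{\mu\}$) does not actually rely on the standing assumption that no Ritz value lies in the gap. By construction, the branch of $g_m$ used for $z<\mu$ only sums over $\theta_j>\mu$, and the branch for $z>\mu$ only sums over $\theta_j<\mu$, so the evaluation point $z$ can never coincide with a pole $\theta_j$ regardless of where the Ritz values sit — the only point excluded is $z=\mu$, which is fine since $\mu\notin\spec(A)$. The Ritz-value-in-gap assumption matters for the \emph{a priori} bound of \cref{prop:lanczos-matfun-approx--apriori-bound}, not for the finiteness of the a posteriori bound proved here.
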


In contrast with what happens when the function $f$ is continuous over the whole spectral interval of $A$, in this case the function $g_m(z)$ is discontinuous at $\mu$ and the lower bound from \cite[Theorem~4.8]{BRS23} no longer holds. Indeed, if we denote by $(\lambda_k, \vec u_k)$ the normalized eigenpairs of the matrix $A$, we can write
\begin{equation*}
	\vec v_{m+1}^T g_m(A) \vec v_{m+1} = \sum_{k = 1}^m g_m(\lambda_k) (\vec u_k^T \vec v_{m+1})^2, \qquad \text{with} \qquad \sum_{k = 1}^m (\vec u_k^T \vec v_{m+1})^2 = 1.
\end{equation*}
This expression shows that the quadratic form $\vec v_{m+1}^T g_m(A) \vec v_{m+1}$ can potentially take any value in the convex hull of the set $\{ g_m(\lambda_k) \}_{k = 1}^m$, depending on the coefficients $(\vec u_k^T \vec v_{m+1})^2$. In the case that $g_m(\lambda_k)$ takes both positive and negative values, it is possible to have $\vec v_{m+1}^T g_m(A) \vec v_{m+1} = 0$, so the lower bound
\begin{equation*}
	\abs{\vec x^T h_\mu(A) \vec x - \norm{\vec x}_2^2 \vec e_1^T h_\mu(T_m) \vec e_1} \ge \norm{\vec x}_2^2 \min_{z \in [\lambda_{\min}, \lambda_{\max}]} {\abs{g_m(z)}}
\end{equation*}
may not hold. Note that this argument is not in contradiction with the statement of \cite[Theorem~4.8]{BRS23} when the function $g_m$ is continuous over the whole spectral interval of $A$, because if $g_m(\lambda_k)$ takes both negative and positive values there must be a point $z \in [\lambda_{\min}, \lambda_{\max}]$ such that $g_m(z) = 0$ by continuity of $g_m$.

\begin{example}
	\label{example:lanczos-bound-comparison}
	In this example we compare the error of the Lanczos approximation of $\vec x^T h_\mu(A) \vec x$ with the a priori error bound from \cref{prop:lanczos-matfun-approx--apriori-bound}, the a posteriori error bound form \cref{prop:lanczos-a-posteriori-error-bound}, and the simple error estimate $\abs{\vec x^T h_\mu(A) \vec x - \quadform_m} \approx \abs{\quadform_m - \quadform_{m+1}}$. 
	We consider a symmetric matrix~$A$ of size $n = 1000$ with $\spec(A) \subset [0, 40]$, and gaps in the intervals $[10, 11]$ and $[20, 25]$.  
	We plot the convergence of the Lanczos method in \cref{fig:example-lanczos-bound}, where on the left plot $\mu = 22$ is contained in the larger gap, and on the right plot $\mu = 10.6$ is inside the smaller gap. In the legend, the ``central $\mu$'' a priori bound refers to the bound obtained by taking $\mu$ in the middle of the gap, as mentioned below \cref{prop:lanczos-matfun-approx--apriori-bound}.

	Note that the convergence of the Lanczos method is much slower when $\mu$ is inside the smaller gap, and in both cases the convergence behavior is irregular and oscillating. 
	The ``central $\mu$'' is the most accurate between the two a priori bounds, but it does not capture the correct convergence rate for $\mu = 10.6$, which is further from the middle of the spectrum; in this case, a better a priori bound would be obtained by using a convergence result with nonsymmetric intervals, such as \cite[Theorem~1.1]{EremenkoYuditskii11}. 
	The a~posteriori bound is also quite irregular, but it is able to capture the convergence rate of the Lanczos approximation correctly in both cases, although it is off by a constant factor. 
	Surprisingly, the error estimate $\abs{\vec x^T h_\mu(A) \vec x - \quadform_m} \approx \abs{\quadform_m - \quadform_{m+1}}$ is very accurate, and it seems to almost never underestimate the error. This behavior can be explained in the following way: the error estimate based on consecutive differences satisfies the triangle inequality
	\begin{equation*}
		\abs{\quadform_m - \quadform_{m+1}} \ge \abs*{\, \abs{\vec x^T h_\mu(A) \vec x - \quadform_m} - \abs{\vec x^T h_\mu(A) \vec x - \quadform_{m+1}} \, },
	\end{equation*}
	and due to the oscillations in the convergence of the Lanczos approximation, the two right-hand side terms $\abs{\vec x^T h_\mu(A) \vec x - \quadform_{m}}$ and $\abs{\vec x^T h_\mu(A) \vec x - \quadform_{m+1}}$ often have significantly different moduli, so we have
	\begin{equation}
		\label{eqn:consec-diff-est-accuracy--oscillations}
		\abs*{\, \abs{\quadform - \quadform_m} - \abs{\quadform - \quadform_{m+1}} \, } \ge c \max\left\{ \abs{\quadform - \quadform_m}, \abs{\quadform - \quadform_{m+1}} \right\}, \qquad \quadform := \vec x^T h_\mu(A) \vec x,
	\end{equation}
	for a constant $c$ that is close to $1$. For instance, if $\abs{\quadform - \quadform_{m}} > \abs{\quadform - \quadform_{m+1}}$, we can take  
	\begin{equation*}
		c = 1 - \frac{\abs{\quadform - \quadform_{m+1}}}{\abs{\quadform - \quadform_{m}}}.
	\end{equation*}
	The inequality \cref{eqn:consec-diff-est-accuracy--oscillations} explains why the consecutive difference estimate is so often an upper bound when the Lanczos method has oscillating convergence, as in the current setting. 

	\begin{figure}[t]
		\centering
		\makebox[\linewidth][c]{
			\begin{subfigure}[t]{.5\textwidth}
				\includegraphics[width=\textwidth]{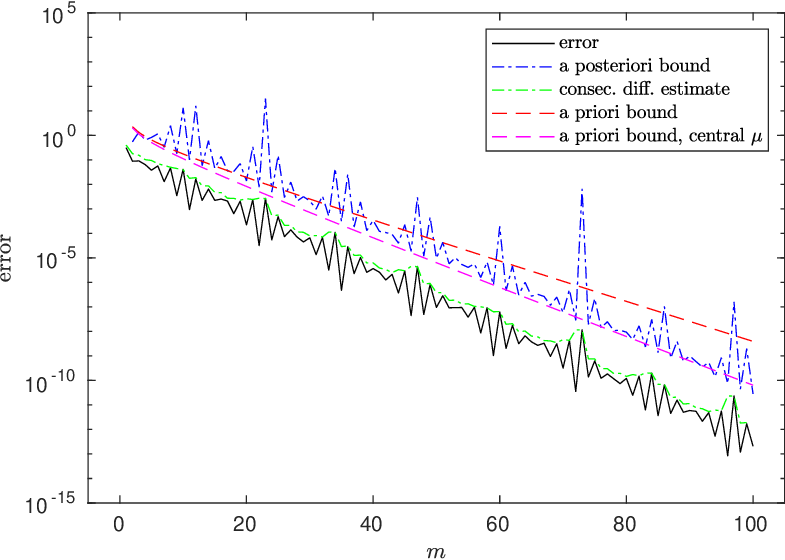}
			\end{subfigure}
			\hspace{0.5cm}
			\begin{subfigure}[t]{.5\textwidth}
				\includegraphics[width=\textwidth]{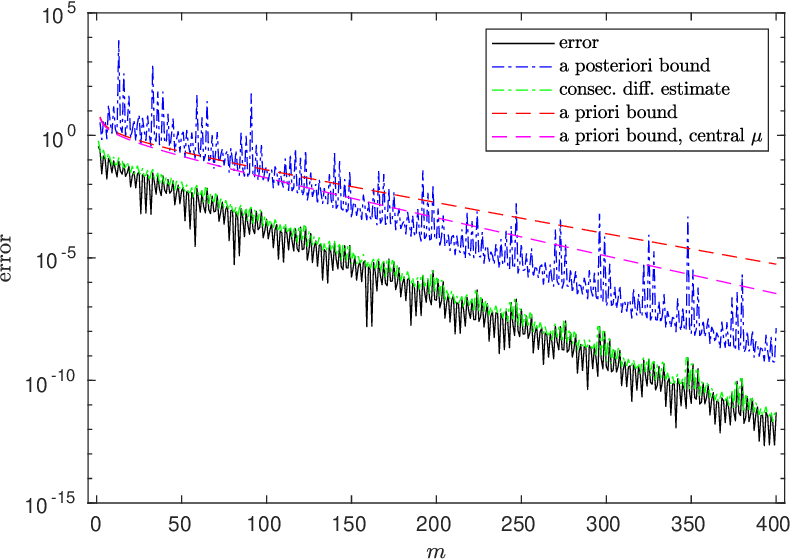}
			\end{subfigure}
			}
		\caption[Comparison of a priori and a posteriori bounds for Lanczos approximation of $\vec x^T P_\mu \vec x$]{Approximation of $\vec x^T h_\mu(A) \vec x$ with the Lanczos algorithm, and comparison with the a priori error bound from \cref{prop:lanczos-matfun-approx--apriori-bound}, the a posteriori error bound from \cref{prop:lanczos-a-posteriori-error-bound} and the consecutive difference estimate. The spectrum of $A$ has gaps in the intervals $[10, 11]$ and $[20, 25]$.  Left: $\mu = 22$. Right: $\mu = 10.6$.  
		\label{fig:example-lanczos-bound}}
	\end{figure}
\end{example}

\section{Final algorithm}
\label{sec:final-algorithm}

In this section we put together all the theoretical results obtained in \cref{sec:algorithm-analysis} to improve \cref{algorithm:eigenvalue-gap-finder--basic} and obtain an efficient algorithm that also provides some guarantees on its output.

\subsection{Choosing the input parameters}
\label{subsec:choosing-input-parameters}

First of all, we need a criterion to decide the number $s$ of Hutchinson sample vectors and the number~$m$ of Lanczos iterations. For this purpose, given a failure probability $\delta$ and a target relative gap width~$\theta$, we set the goal to find all gaps in the spectrum of $A$ with relative width greater than or equal to $\theta$, with probability~$1-\delta$.

How should we choose $s$ and $m$ so that all these gaps are found?
Let us consider a candidate gap $[\mu_1, \mu_2] \subset \R$ and fix a parameter $\varepsilon > 0$, and say that we take $m$ such that the Lanczos approximation error for each quadratic form $\vec x_i^T P_\mu \vec x_i$ is smaller than $\varepsilon/2$ for all~$\mu$ inside the gap, so that $\traceH{s}(P_\mu)$ is contained in an interval of width smaller than $\varepsilon$ for all $\mu \in [\mu_1, \mu_2]$. Then there are two possible scenarios: either there are no eigenvalues of $A$ in the interval $[\mu_1, \mu_2]$, or all the jumps in $\traceH{s}(P_\mu)$ associated to eigenvalues inside that interval are $\varepsilon$-small (more precisely, the sum of the heights of these jumps is smaller than $\varepsilon$). This second event has a small probability of occurring, which can be bounded using \cref{lemma:small-jump-probability}, so we can conclude that $[\mu_1, \mu_2]$ is a gap in the spectrum of $A$ with high probability.

The probability that all jumps in $\traceH{s}(P_\mu)$ associated to eigenvalues in $[\mu_1, \mu_2]$ are $\varepsilon$-small is smaller than the probability of a single jump being $\varepsilon$-small, so we can impose
\begin{equation*}
	\prob(y_s \le s \varepsilon) \le \delta,
\end{equation*}
where $y_s$ is a $\chi^2$ distribution with $s$ degrees of freedom, see \cref{subsec:algorithm-analysis--hutchinson}.
By \cref{lemma:small-jump-probability}, this condition is satisfied provided that
\begin{equation*}
	s \ge \frac{2 \log(1/\delta)}{\log(1/\varepsilon) + \varepsilon - 1}.
\end{equation*}
If the relative width of $[\mu_1, \mu_2]$ is at least $\theta$, by \cref{prop:lanczos-matfun-approx--apriori-bound} we have
\begin{equation*}
	\Big\lvert \traceH{s}(P_\mu) - \frac{1}{s}\sum_{i = 1}^s\quadform_m^{(i)}\Big\rvert \le \frac{C_\rev{\theta} \sum_{i = 1}^s \norm{\vec x_i}_2^2}{s\sqrt{m-1}} \bigg( \frac{1 - \theta}{1 + \theta} \bigg)^{m-1},
\end{equation*}
where we denote by $\quadform_m^{(i)}$ the approximation of $\vec x_i^T P_\mu \vec x_i$ after $m$ Lanczos iterations. By \cref{eqn:lanczos-accuracy--largegap-cost-requirement}, the above bound is smaller than $\varepsilon/2$ if we take
\begin{equation}
	\label{eqn:lanczos-iterations-m-best-choice}
	m \ge 1 + \frac{\log\big(2 C_\rev{\theta} \sum_{i = 1}^s\norm{\vec x_i}_2^2 / s \varepsilon\big)}{\log\big((1+\theta)/(1-\theta)\big)}, \qquad C_\rev{\theta} = \frac{1 - \theta}{\sqrt{\pi \theta}} + 1.
\end{equation}
Note that although the a priori bound in \cref{prop:lanczos-matfun-approx--apriori-bound} allows us to say that the Lanczos error is smaller than $\varepsilon/2$ when $\mu$ is inside a gap wider than $\theta$, we cannot use it to determine for which values of $\mu$ this actually happens. To detect the gaps in practice, we are going to use either the a posteriori bound from \cref{prop:lanczos-a-posteriori-error-bound} or the error estimate based on consecutive differences, and assume that it is at least as accurate as the a priori bound, i.e., that it is also smaller than $\varepsilon/2$ inside a gap with relative width $\theta$. This assumption is usually satisfied, as shown in \cref{example:lanczos-bound-comparison}, at least in the central portion of each gap. \rev{Recalling the discussion in \cref{rem:ritz-value-in-gap}, we can replace $\theta$ with $\theta/2$ to ensure that all gaps of width $\theta$ are detected even when there is a Ritz value within a gap.}

We have obtained bounds on $s$ and $m$ that depend on $\delta$, $\theta$ and $\varepsilon$, where the latter is a parameter that can be freely chosen. In order to obtain an algorithm that is as efficient as possible, we should aim to minimize the term that dominates the computational cost of the algorithm, which is the $O(sm \, \nnz(A))$ term corresponding to the $sm$ matrix-vector products with $A$. We have
\begin{equation*}
	sm \ge \frac{2 \log(2/\delta)}{\log(1/\varepsilon) + \varepsilon - 1} \cdot \bigg( 1 + \frac{\log\big(2 C_\rev{\theta} \sum_{i = 1}^s \norm{\vec x_i}_2^2 / s \varepsilon\big)}{\log\big((1+\theta)/(1-\theta)\big)} \bigg),
\end{equation*}
and $\varepsilon$ should be chosen in order to minimize the right hand side.
With some simple computations one can show that the function
\begin{equation*}
	f(\varepsilon) = \frac{1}{\log(1/\varepsilon) + \varepsilon - 1} \cdot \bigg( 1 + \frac{\log\big(2 C_\rev{\theta} n / \varepsilon\big)}{\log\big((1+\theta)/(1-\theta)\big)} \bigg)
\end{equation*}
is increasing for $\varepsilon \in (0, 1)$ since $2 C_\rev{\theta} n \ge 1$. Recalling that $\expec[\norm{\vec x_i}^2] = n$ for all $i$, this implies that we should take $\varepsilon$ as small as possible, and since $s$ must be an integer, in practice the most efficient choice is to take $\varepsilon$ so that the probability of a jump being $\varepsilon$-small is lower than~$\delta$ already when $s = 1$, i.e.,
\begin{equation}
	\label{eqn:epsilon-best-choice}
	\varepsilon \le e^{-1} \delta^2.
\end{equation}
In other words, we should simply use a single quadratic form $\vec x^T P_\mu \vec x$ to estimate $\trace(P_\mu)$, instead of using the more general stochastic trace estimator with $s$ different random vectors.   

\rev{
		We mention that bounds on the number of Hutchinson sample vectors required to achieve a certain accuracy have been obtained in \cite{CTU21,BKM22} in the context of spectral density estimation. For instance, it is shown in \cite[Theorem~1]{CTU21} that, in order to obtain an accuracy of order $t$ on the normalized spectral density in the Wasserstein distance, it is enough to take roughly $\ceil{1/nt^2}$ sample vectors. When $t$ is not too small, this implies that it is sufficient to use a single vector in Hutchinson's estimator, a result that matches the conclusion reached in this section. However, note that the result from \cite{CTU21} is not applicable in our setting: indeed, an absolute error $\varepsilon$ on $\eigcount(\mu)$ corresponds to an error $t = \varepsilon/n$ on the normalized spectral density, so that the number of samples predicted by \cite[Theorem~1]{CTU21} would be roughly $\ceil{n/\varepsilon^2}$, which grows linearly with~$n$, matching our initial observations in \cref{subsec:algorithm-analysis--hutchinson}. Another key difference is that the theoretical results for spectral density estimation are usually given in terms of the Wasserstein distance or some other global measure for the quality of approximation. On the other hand, we are only interested in approximating the spectral density close to large gaps, so it is reasonable to expect that better results can be obtained with a different kind of analysis. We refer to \cref{subsec:literature-comparison} for further discussion and comparison of our analysis with the existing literature.
}

\subsection{Detecting gaps in the spectrum}
\label{subsec:detecting-gaps}

To detect the gaps in the spectrum, we use an a posteriori error bound to determine when the error of the Lanczos approximation is smaller than~$\varepsilon/2$. We focus on the case $s = 1$, with a single random Gaussian vector $\vec x \in \R^n$, since we concluded that it is the most efficient choice in \cref{subsec:choosing-input-parameters}. More precisely, for each~$\mu$ we perform $m$ iterations of the Lanczos method to obtain $\quadform_m(\mu) \approx \vec x^T P_\mu \vec x$ with the approximation~\cref{eqn:quadform-lanczos-approximation}, and we use either \cref{prop:lanczos-a-posteriori-error-bound} or the estimate based on consecutive differences to obtain
\begin{equation*}
	\abs{\vec x^T P_\mu \vec x - \quadform_m(\mu)} \le \postbound_m(\mu),
\end{equation*}
where either 
\begin{equation*}
	\postbound_m(\mu) = \norm{\vec x}_2^2 \max_{z \in [\lambda_{\min}, \lambda_{\max}]} \abs{g_m(z)} \qquad \text{with $g_m(z)$ as defined in \cref{eqn:lanczos-post-bound--gm-definition}},
\end{equation*}
or $\postbound_m(\mu) = c \abs{\quadform_m(\mu) - \quadform_{m+1}(\mu)}$, with a safety factor $c \ge 1$. As shown in \cref{example:lanczos-bound-comparison}, the consecutive difference error estimate is usually more accurate than the a posteriori error bound from \cref{prop:lanczos-a-posteriori-error-bound}, and very often we can also expect it to be an actual upper bound due to the oscillation in the convergence of the Lanczos algorithm. In the discussion that follows, we are going to assume that $\postbound_m(\mu)$ is always an upper bound for the error $\abs{\vec x^T P_\mu \vec x - \quadform_m(\mu)}$; since there is no guarantee that this is true for the consecutive difference estimate, we will discuss some ways to make it more robust in \cref{example:trace-bound-final}.
For all $\mu$, letting
\begin{align*}
	\qfupper_m(\mu) & := \quadform_m(\mu) + \postbound_m(\mu), \\
	\qflower_m(\mu) & := \quadform_m(\mu) - \postbound_m(\mu),
\end{align*}
we have $\vec x^T P_\mu \vec x \in [\qflower_m(\mu), \qfupper_m(\mu)]$.

Recalling that the quadratic form $\vec x^T P_\mu \vec x$ is an increasing function of $\mu$, we can refine the upper and lower bounds $\qfupper_m(\mu)$ and $\qflower_m(\mu)$. Indeed, we can also take the upper and lower bounds to be increasing in $\mu$: specifically, for any $\mu < \mu'$ we must have
\begin{equation*}
	\vec x^T P_\mu \vec x \le \vec x^T P_{\mu'} \vec x \le \qfupper_m(\mu') \qquad \text{and} \qquad \vec x^T P_{\mu'} \vec x \ge \vec x^T P_\mu \vec x \ge \qflower_m(\mu),
\end{equation*}
so we can replace the bounds $\qfupper_m(\mu)$ and $\qflower_m(\mu)$ with, respectively,
\begin{equation}
	\label{eqn:lanczos-post-bound-monotone}
	\qfupperhat_m(\mu) := \min_{\mu' \ge \mu} \qfupper_m(\mu') \qquad \text{and} \qquad \qflowerhat_m(\mu) := \max_{\mu' \le \mu} \qflower_m(\mu').
\end{equation}
Note that the bounds $\qfupperhat_m(\mu)$ and $\qflowerhat_m(\mu)$ are by construction increasing functions of $\mu$. We can further improve the bounds by taking the best ones over several values of $m$. Indeed, given a set of positive integers $\mathcal{M} \subset \N$, we can define
\begin{equation}
	\label{eqn:lanczos-upper-lower-a-posteriori-bounds}
	\qfupperhat_\mathcal{M}(\mu) := \min_{m \in \mathcal{M}}\qfupperhat_m(\mu) \qquad \text{and} \qquad \qflowerhat_\mathcal{M}(\mu) := \max_{m \in \mathcal{M}}\qflowerhat_m(\mu),
\end{equation}
and we still have $\vec x^T P_\mu \vec x \in [\qflowerhat_\mathcal{M}(\mu), \qfupperhat_\mathcal{M}(\mu)]$. Using a set $\mathcal{M}$ with more than a single value of $m$ can be useful to improve the bounds in the case when there is an eigenvalue of $T_m$ inside one of the gaps. In such a situation, by considering bounds for $\mathcal{M} = \{m-d, \dots, m-1, m\}$ there is a higher chance that one of the matrices $T_{m-j}$, $j = 0, \dots, d$ has no eigenvalues in the gap, so the bounds $\qfupperhat_\mathcal{M}(\mu)$ and $\qflowerhat_\mathcal{M}(\mu)$ are likely going to be better than $\qfupperhat_m(\mu)$ and $\qflowerhat_m(\mu)$.

After incorporating these improvements, we can conclude that if $\varepsilon$ is chosen according to~\cref{eqn:epsilon-best-choice} and for some $\mu_1 < \mu_2$ we have $\abs{\qfupperhat_\mathcal{M}(\mu_2) - \qflowerhat_\mathcal{M}(\mu_1)} \le \varepsilon$, then there are no eigenvalues of $A$ in the interval $[\mu_1, \mu_2]$ with probability at least $1 - \delta$.

\subsection{Algorithm summary}
\label{subsec:algorithm-summary}

The analysis in the previous sections leads to the following algorithm, whose pseudocode is given in \cref{algorithm:eigenvalue-gap-finder--final}.
Given an input failure probability $\delta$ and target relative gap width $\theta$, we first compute~$\varepsilon$ according to \cref{eqn:epsilon-best-choice} such that the probability of having an $\varepsilon$-small jump at an eigenvalue in Hutchinson's trace estimator is less than~$\delta$ when using a single random vector (i.e., with $s = 1$).
We then use \cref{prop:lanczos-matfun-approx--apriori-bound} to select the number $m$ of Lanczos iterations that ensures that the error of the Lanczos approximation to the quadratic form~$\vec x^T h_\mu(A) \vec x$ is smaller than $\varepsilon/2$ whenever $\mu$ is inside a gap with relative width at least $\theta$. As an alternative, we could fix the number of Lanczos iterations $m$ a priori, but by doing so we would lose the guarantee that all the gaps with width larger than $\theta$ are located. Then, we run $m$ iterations of the Lanczos method to compute the approximation \cref{eqn:quadform-lanczos-approximation} and the upper and lower a posteriori bounds $\qfupperhat_\mathcal{M}(\mu)$ and~$\qflowerhat_\mathcal{M}(\mu)$ defined in \cref{eqn:lanczos-upper-lower-a-posteriori-bounds}, for all input values of $\mu$ and a set $\mathcal{M}$ of integers smaller than or equal to~$m$, such as $\{m-d, \dots, m-1, m\}$, with, e.g., $d = 2$.
All the intervals $[\mu_1, \mu_2]$ such that $\abs{\qfupperhat_\mathcal{M}(\mu_2) - \qflowerhat_\mathcal{M}(\mu_1)} \le \varepsilon$ are hence gaps in the spectrum of $A$ with probability at least $1- \delta$.

\begin{algorithm}[t]
	\caption{Eigenvalue gap finder}
	\label{algorithm:eigenvalue-gap-finder--final}
	\begin{algorithmic}[1]
		\Require Symmetric matrix $A \in \R^{n \times n}$, failure probability $\delta$, target relative gap size $\theta$, vector of test parameters $\vec \mu = [\mu_1, \dots, \mu_{N_f}]$
		\Ensure $\texttt{tr}_j \approx \trace(P_{\mu_j})$ for $j = 1, \dots, N_f$, intervals $[a_i, b_i]$ that contain no eigenvalues of~$A$ with probability $1 - \delta$ each

		\State Compute $\varepsilon$ according to \cref{eqn:epsilon-best-choice}, ensuring that the probability of a jump being smaller than $\varepsilon$ with $s = 1$ Hutchinson vectors is smaller than $\delta$.
		\State Draw a random Gaussian vector $\vec x \in \R^n$.
		\State Compute $m$ according to \cref{eqn:lanczos-iterations-m-best-choice}, ensuring that the Lanczos approximation \cref{eqn:quadform-lanczos-approximation} to $\vec x^T P_\mu \vec x$ has an error smaller than $\varepsilon/2$ if $\mu$ is within a gap of relative width $\theta$ or larger.
		\State Select a set $\mathcal{M}$ of integers smaller than or equal to $m$, such as $\mathcal{M} = \{ m-4, \dots, m-1, m \}$.
		\State Compute $V_{m}$ and $\underline{T}_m$ by running $m$ iterations of the Lanczos algorithm with the matrix $A$ and the vector $\vec x$.
		\State Compute the eigendecomposition $T_m = U_m D_m U_m^T$ and $\vec w_m = \norm{\vec x}_2 U_m^T \vec e_1$.
		\For {$j = 1, \dots, N_f$}
		\State Compute $\quadform_{m,j} = \vec w_m^T h_{\mu_j}(D_m)\vec w_m$.
		\For {$m' \in \mathcal{M}$}
		\State 
		\parbox[t]{\dimexpr\linewidth-3em}%
		{
			Compute the monotone upper and lower bounds $\qfupperhat_{m'}(\mu_j)$ and $\qflowerhat_{m'}(\mu_j)$ using \cref{eqn:lanczos-post-bound-monotone}, where $\postbound_{m'}(\mu_j)$ is obtained either with \cref{prop:lanczos-a-posteriori-error-bound} or by running Lanczos for one additional iteration to compute $\abs{\quadform_{m', j} - \quadform_{m'+1,j}}$.
		}
		\EndFor
		\State Compute the bounds $\qfupperhat_\mathcal{M}(\mu_j) = \displaystyle\min_{m' \in \mathcal{M}}\qfupperhat_{m'}(\mu_j)$ and $\qflowerhat_\mathcal{M}(\mu_j) = \displaystyle\max_{m' \in \mathcal{M}}\qflowerhat_{m'}(\mu_j)$.
		\EndFor
		\State Compute $\qfupperhat_{\mathcal{M}}(\vec \mu) - \qflowerhat_{\mathcal{M}}(\vec \mu)$ and determine the largest intervals $[a_i, b_i]$ such that $\abs{\qfupperhat_{\mathcal{M}}(b_i) - \qflowerhat_{\mathcal{M}}(a_i)} \le \varepsilon$.
	\end{algorithmic}
\end{algorithm}

Note that $m$ is selected so that the a priori error bound on the approximation of $\vec x^T h_\mu(A) \vec x$ from \cref{prop:lanczos-matfun-approx--apriori-bound} is smaller than $\varepsilon/2$ whenever $\mu$ is inside a gap with relative width at least~$\theta$, so, assuming that the a posteriori error bound or estimate is at least as accurate as the a priori one, we expect that all gaps of relative width $\theta$ or larger are detected by \cref{algorithm:eigenvalue-gap-finder--final}.
Numerical evidence shows that this assumption is generally satisfied when $\mu$ is not too close to the extrema of the gap and $\varepsilon$ is not too large.
We also mention that, although \cref{algorithm:eigenvalue-gap-finder--final} cannot explicitly find the gap between the $k$-th and the $(k+1)$-th eigenvalue for a given $k$, it still computes estimates for $\trace(P_{\mu_j})$ for all $\mu_j \in \vec \mu$, so we know the approximate number of eigenvalues below each detected gap and thus we can roughly determine if one of the gaps found by the algorithm is likely to be the one between $\lambda_k$ and~$\lambda_{k+1}$ (of course, we can only expect that \cref{algorithm:eigenvalue-gap-finder--final} will detect this gap if we assume that it is wide enough). If more accuracy on the number of eigenvalues below each gap is required, then one should use a larger number $s$ of sample vectors for Hutchinson's trace estimator. Alternatively, if it is feasible to compute an $LDL^T$ factorization, \rev{we can use a single factorization} to compute the exact number of eigenvalues below a certain gap in order to verify if it corresponds to the searched gap between the eigenvalues $\lambda_k$ and~$\lambda_{k+1}$. \rev{Observe that while computing such a factorization can be quite expensive, it would still be significantly cheaper than trying to find the gap between $\lambda_k$ and $\lambda_{k+1}$ by factorizing shifted matrices $A - \mu I$ in a trial-and-error approach, which would require the computation of several $LDL^T$ factorizations.}

\begin{remark}
	\label{rem:gapfinder-technical-details--horizontal-line}
	Recall that $\vec x^T P_\mu \vec x$ is a staircase-like piecewise constant function of $\mu$, and in particular it must be constant in any interval that contains no eigenvalues of $A$. This means that given a candidate gap $[\mu_1, \mu_2]$, in addition to the condition $\abs{\qfupperhat_\mathcal{M}(\mu_2) - \qflowerhat_\mathcal{M}(\mu_1)} \le \varepsilon$, we \rev{would} also need to check that $\qflowerhat_\mathcal{M}(\mu_2) < \qfupperhat_\mathcal{M}(\mu_1)$. If this additional condition is not satisfied, it is impossible for $\vec x^T P_\mu \vec x$ to be a constant function for $\mu \in [\mu_1, \mu_2]$, and hence we can conclude with certainty that there must be at least one eigenvalue of $A$ in the interval $[\mu_1, \mu_2]$. This seems to cause a contradiction when the condition $\abs{\qfupperhat_\mathcal{M}(\mu_2) - \qflowerhat_\mathcal{M}(\mu_1)} \le \varepsilon$ is verified but $\qflowerhat_\mathcal{M}(\mu_2) < \qfupperhat_\mathcal{M}(\mu_1)$ is not, since it would imply that all the eigenvalues of $A$ contained in~$[\mu_1, \mu_2]$ have an associated $\varepsilon$-small jump, which is a low-probability event. However, it simply means that this situation is very unlikely to happen in a practical scenario, and it would probably not be encountered if the algorithm was run on the same problem for a second time. \rev{In our implementation, we also check if the condition $\qflowerhat_{\mathcal{M}}(\mu_2) < \qfupperhat_{\mathcal{M}}(\mu_1)$ holds, even though in practice we never encountered a situation in which it was not satisfied.}
\end{remark}

\begin{remark}
	\label{rem:bisection-approach-mu}
	\rev{We mention that one could implement a variant of \cref{algorithm:eigenvalue-gap-finder--final} which does not require as input a vector of parameters $\vec \mu = [\mu_1, \dots, \mu_{N_f}]$, but instead detects gaps by sequentially evaluating the upper and lower bounds $\qfupperhat_\mathcal{M}(\mu)$ and~$\qflowerhat_\mathcal{M}(\mu)$ on suitably chosen points. Such an algorithm would likely employ fewer than $N_f$ values of $\mu$ to reach the same accuracy. However, since the bulk of the computational cost is in the construction of the Krylov subspace, the number of points $N_f$ has a negligible impact on performance (see \cref{subsec:simultaneous-computation-for-different-mu,subsec:computational-cost}) and thus we did not explore this approach in detail, opting instead for a more straightforward strategy with a sufficiently dense grid of points.}
\end{remark}

\subsection{Computational cost}
\label{subsec:computational-cost}

The most expensive operations in \cref{algorithm:eigenvalue-gap-finder--final} are the $m$ matrix-vector products with $A$ required to construct the Krylov basis in the Lanczos algorithm, which cost $O(m \, \nnz(A))$ for a sparse matrix $A$. The additional operations done for the computation of the quadratic forms $\vec x^T P_{\mu_j} \vec x$  with $N_f$ different parameters $\mu_j$ add up to $O(nm + m^2 + m N_f)$, as discussed in \cref{subsec:simultaneous-computation-for-different-mu}. 
If we estimate the error with the consecutive difference estimate, we have to run the Lanczos algorithm for one additional iteration, and compute the differences $\abs{\quadform_{m',j} - \quadform_{m'+1,j}}$ for all $m' \in \mathcal{M}$. If $\abs{\mathcal{M}} = d$ and the elements of $\mathcal{M}$ are consecutive integers, this amounts to computing $d+1$ quadratic forms $\quadform_{m', j}$ for all $j$, so the additional cost is $O(m^2 d + m N_f d)$.    

On the other hand, if we use the a posteriori error bound from \cref{prop:lanczos-a-posteriori-error-bound}, we have to evaluate the coefficients $\alpha_j$, $\beta_j$ and $\gamma_j$ for all $\mu_j$, which can be done with $O(m^2)$ operations assuming that the eigenvalue decomposition of $T_m$ is available, and compute the maximum of $\abs{g_m(z)}$ over $[\lambda_{\min}, \lambda_{\max}]$, where $g_m$ is defined in \cref{eqn:lanczos-post-bound--gm-definition}.
The evaluation of $g_m(z)$ for all~$\mu_j$ at a single point~$z$ requires $O(m N_f)$ operations, and if a discretization of $[\lambda_{\min}, \lambda_{\max}]$ with $L$ points is used, we can approximately evaluate the bound in \cref{prop:lanczos-a-posteriori-error-bound} for all $\mu_j$ with $O(m^2 + m N_f L)$ operations. 
If a set $\mathcal{M}$ with $d$ elements is used to compute the more robust bounds~$\qfupperhat_\mathcal{M}(\mu_j)$ and $\qflowerhat_\mathcal{M}(\mu_j)$, the cost for the computation of the bounds is roughly multiplied by a factor $d$ and becomes $O(m^2 d + m N_f L d)$, where we have also included the cost of computing the eigenvalues and eigenvectors of $T_{m'}$ for $m' \in \mathcal{M}$. 
Note that given $\qfupper_m(\mu_j)$ and~$\qflower_m(\mu_j)$, the refined bounds $\qfupperhat_m(\mu_j)$ and $\qflowerhat_m(\mu_j)$ described in \cref{subsec:detecting-gaps} can be computed for all~$\mu_j$ with~$O(N_f)$ operations. 
Finally, once $\qfupperhat_\mathcal{M}(\mu_j)$ and $\qflowerhat_\mathcal{M}(\mu_j)$ have been computed, the gaps can be located with a cost of~$O(N_f)$. 
The total computational cost of \cref{algorithm:eigenvalue-gap-finder--final} is therefore $O(m \, \nnz(A) + nm + m^2 d + m N_f d)$ with the consecutive difference estimate, and $O(m \, \nnz(A) + nm + m^2 d + m N_f L d)$ with the a posteriori bound from \cref{prop:lanczos-a-posteriori-error-bound}. 

Note that all operations associated with different values of $\mu$ are completely independent, so the algorithm can be easily parallelized, for instance by assigning different slices of the spectral interval to different processors. This parallelization is not particularly helpful when the matrix size $n$ is large enough, since the dominant part of the computation would still be the $O(m \, \nnz(A))$ term associated with matrix-vector products with $A$; however, parallelizing the computations associated with different parameters $\mu$ can be helpful for mitigating the $O(m N_f L d)$ cost of computing the a posteriori error bounds from \cref{prop:lanczos-a-posteriori-error-bound}, which can become quite high if both $N_f$ and $L$ are large, see, for instance, the experiment in \cref{subsec:experiments-real-world}.   

If our goal is to detect all gaps with relative width larger than $\theta$ with probability at least $1 - \delta$, we should take $m$ according to \cref{eqn:lanczos-iterations-m-best-choice} with $\varepsilon \le e^{-1} \delta^2$, i.e.,
\begin{equation}
	\label{eqn:lanczos-iterations-m--best-choice--delta-theta}
	m \ge 1 + \frac{1 + \log\big(2 C_\rev{\theta} \norm{\vec x}_2^2 / \delta^2 \big)}{\log\big((1+\theta)/(1-\theta)\big)}, \qquad C_\rev{\theta} = \frac{1 - \theta}{\sqrt{\pi \theta}} + 1,
\end{equation} 
where we recall that $\expec[\norm{\vec x}_2^2] = n$.  
By looking at the behavior of the right-hand side of \cref{eqn:lanczos-iterations-m--best-choice--delta-theta} as~$\theta \to 0$, using the fact that 
\begin{equation*}
	\log \bigg( \frac{1+\theta}{1-\theta} \bigg) \approx \frac{1}{2\theta},
\end{equation*} 
we conclude that for small $\theta$ we should take
\begin{equation*}
	m \approx \log \bigg(\frac{2 \norm{\vec x}_2^2}{\delta^2 \sqrt{\pi \theta}}\bigg) \frac{1}{2 \theta}.
\end{equation*}
Although the dependence of $m$ on the failure probability $\delta$ and the matrix size $n$ is not particularly severe since they appear inside a logarithm, the main drawback of this approach is the $O(1/\theta)$ dependence on the relative gap width $\theta$, which causes \cref{algorithm:eigenvalue-gap-finder--final} to quickly become more expensive as the target gap width is reduced. This is an inherent limitation of the Lanczos method for the computation of $\vec x^T P_\mu \vec x$, and more precisely of the polynomial approximation of the sign function (see \cref{prop:sign-polynomial-approx-bound--hasson}). As we mentioned in \cref{subsec:algorithm-analysis--lanczos}, \cref{prop:sign-polynomial-approx-bound--hasson} can be improved by considering two nonsymmetric intervals as in \cite{EremenkoYuditskii11}, and this would in turn lead to an improved estimate on the required number of Lanczos iterations $m$; however, in general we do not expect to find an asymptotic estimate that is better than $m = O(1/\theta)$. Indeed, for a gap located near the center of $\spec(A)$, the spectrum of $A$ is contained in two intervals that are approximately symmetric with respect to the gap, and there would not be any significant advantage in using the improved result \cite[Theorem~1.1]{EremenkoYuditskii11}.

\begin{example}
	\label{example:trace-bound-final}

In this example we show how the bounds described in \cref{subsec:detecting-gaps} perform on the simple test problem from \cref{example:hutchinson-projector-traceest}. The matrix $A$ is $600 \times 600$, with $\spec(A) \subset [0, 60]$ and three gaps in the spectrum given by the intervals $[20, 21]$, $[30, 32]$ and $[40, 44]$. We use a single Gaussian vector $\vec x \in \R^n$ for Hutchinson's estimator and $m = 50$ Lanczos iterations. We compare the exact $\vec x^T P_\mu \vec x$ with the upper and lower bounds $\qfupperhat_\mathcal{M}(\mu)$ and $\qflowerhat_\mathcal{M}(\mu)$ obtained both with \cref{prop:lanczos-a-posteriori-error-bound} and with the consecutive difference error estimate. In particular, for the latter we use the error estimate $\postbound_m(\mu_j) = c \abs{\quadform_{m,j} - \quadform_{m+1, j}}$, with a safety factor $c = 2$. In both cases, we take $\mathcal{M} = \{ m-d+1, \dots, m \}$ with $d = 3$. 
Since $c \abs{\quadform_{m,j} - \quadform_{m+1,j}}$ is not guaranteed to be an upper bound for $\abs{\vec x^T P_{\mu_j} \vec x}$, the inequalities $\qflowerhat_\mathcal{M}(\mu) \le \vec x^T P_\mu \vec x \le \qfupperhat_\mathcal{M}(\mu)$ do not necessarily hold for all $\mu$, especially after the two minimizations and maximizations in \cref{eqn:lanczos-post-bound-monotone} and \cref{eqn:lanczos-upper-lower-a-posteriori-bounds}. To overcome this issue, we also propose to use a ``safe'' variant of the upper and lower estimates for $\vec x^T P_\mu \vec x$, defined as
\begin{equation}
	\label{eqn:lanczos-upper-lower-robust-safe-estimates}
	\qfupperhatsafe_\mathcal{M}(\mu) := \max_{m \in \mathcal{M}}\qfupperhat_m(\mu) \qquad \text{and} \qquad \qflowerhatsafe_\mathcal{M}(\mu) := \min_{m \in \mathcal{M}}\qflowerhat_m(\mu),
\end{equation} 
which correspond to taking the worst of the bounds associated with $\mathcal{M}$ instead of the best ones. This variant significantly increases the chance that $\qfupperhatsafe_\mathcal{M}(\mu)$ and $\qflowerhatsafe_\mathcal{M}(\mu)$ are an upper and lower bound for~$\vec x^T P_\mu \vec x$, respectively, although it reduces the overall accuracy of the estimates.

The bounds are shown in \cref{fig:example-trace-bound-final}, including the safe variants $\qfupperhatsafe_\mathcal{M}(\mu)$ and $\qflowerhatsafe_\mathcal{M}(\mu)$ of the bounds obtained by using consecutive differences.
Notice that the upper and lower estimates $\qfupperhat_\mathcal{M}(\mu)$ and $\qflowerhat_\mathcal{M}(\mu)$ computed with consecutive differences are much closer to $\vec x^T P_\mu \vec x$ compared to the bounds obtained with \cref{prop:lanczos-a-posteriori-error-bound}, but they sometimes fail to satisfy $\qflowerhat_\mathcal{M}(\mu) \le \vec x^T P_\mu \vec x \le \qfupperhat_\mathcal{M}(\mu)$: see for instance the right plot in \cref{fig:example-trace-bound-final}, which is a close-up of the larger gap $[40, 44]$. On the other hand, the safe variants $\qfupperhatsafe_\mathcal{M}(\mu)$ and $\qflowerhatsafe_\mathcal{M}(\mu)$ are more robust, while still being more accurate than the a posteriori bounds from \cref{prop:lanczos-a-posteriori-error-bound}. To provide a quantitative comparison, for the plot in \cref{fig:example-trace-bound-final} we used $4000$ different shift parameters~$\mu_j$, and the inequality $\vec x^T P_{\mu_j} \vec x \le \qfupperhat_\mathcal{M}(\mu_j)$ was not satisfied for $251$ of them, while $\vec x^T P_{\mu_j} \vec x \le \qfupperhatsafe_\mathcal{M}(\mu_j)$ was not satisfied for only $35$ values of $j$. Note that the fraction of values of $\mu$ for which the safe upper estimate $\qfupperhatsafe_{\mathcal{M}}(\mu)$ fails to be an upper bound is smaller than $10^{-2}$, which is the failure probability that we use for the experiments in \cref{sec:numerical-experiments}.

\begin{figure}[t]
	\centering
	\makebox[\linewidth][c]{
		\begin{subfigure}[t]{.50\textwidth}
			\includegraphics[width=\textwidth]{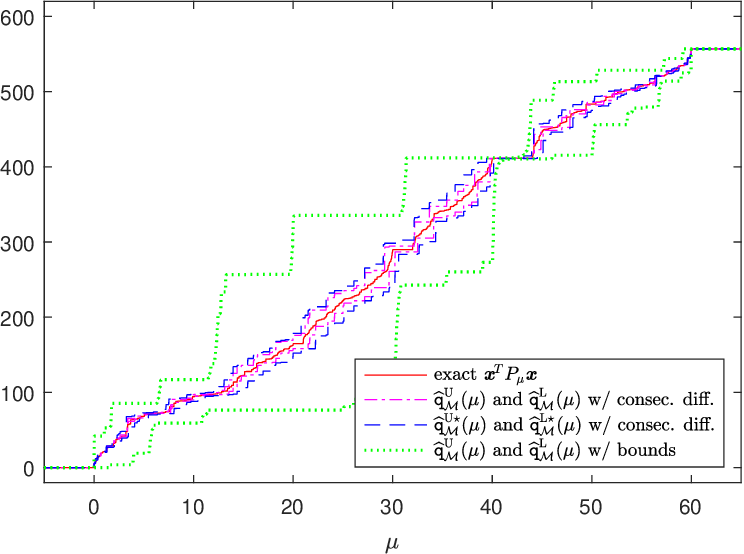}
		\end{subfigure}
		\begin{subfigure}[t]{.50\textwidth}
			\includegraphics[width=\textwidth]{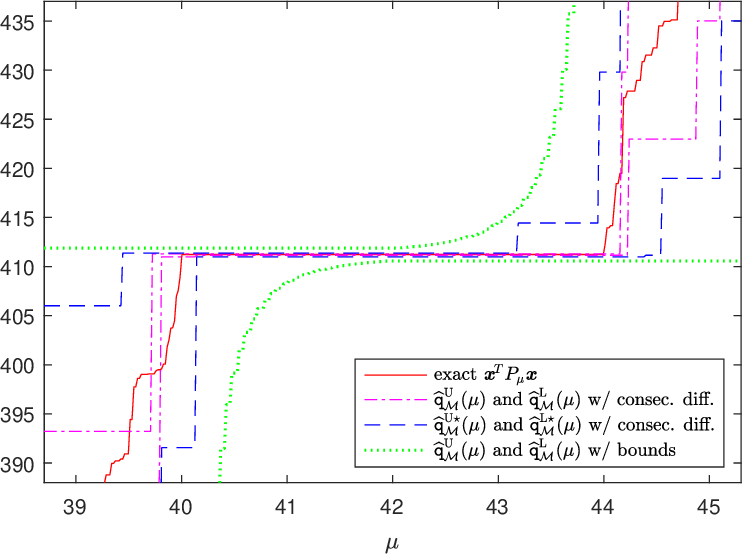}
		\end{subfigure}
		}
	\caption[Upper and lower bounds for $\vec x^T P_\mu \vec x$ for several $\mu$]{Upper and lower bounds for $\vec x^T P_\mu \vec x$ obtained with the Lanczos algorithm with $m = 50$ iterations. The right plot is a close-up of the left plot around the larger gap $[40, 44]$.  
	\label{fig:example-trace-bound-final}}
\end{figure}

\end{example}

\rev{

\subsection{Literature comparison}	
\label{subsec:literature-comparison}

As we already mentioned in some of the previous sections, the combination of Hutchinson's trace estimator with the Lanczos algorithm has been used in several works on related problems, for instance in \cite{CTU21,BKM22,DPS16}. The main aspect that sets our work apart from the rest of the literature is the focus on the detection of gaps, and the consequent difference in the theoretical analysis that we perform. 
Indeed, most results in the literature on spectral density approximation are concerned with global approximation of the spectral density, such as in the Wasserstein distance \cite[Theorem~1]{CTU21}, so they are unable to exploit the faster convergence of the Lanczos method for $\vec x^T h_\mu(A) \vec x$ when $\mu$ is located inside a large gap (see \cref{prop:lanczos-matfun-approx--apriori-bound}). Moreover, convergence results found in the literature are usually based on the approximation error for $\trace(P_\mu)$ with Hutchinson's estimator, using results such as \cref{prop:hutchinson-tail-bound--gaussian}. In our context, we require an absolute error on $\eigcount(\mu) = \trace(P_\mu)$ smaller than $\frac{1}{2}$, which corresponds to a relative error of $\frac{1}{2n}$ on the normalized spectral density; as we observed at the end of \cref{subsec:choosing-input-parameters}, achieving this level of accuracy with a bound such as \cite[Theorem~1]{CTU21} would require us to take $s = O(n)$ vectors for Hutchinson's estimator, which are too many to make this approach feasible. 

On the other hand, our analysis based on the height of the jumps in $\vec x^T h_\mu(A) \vec x$ and $\traceH{s}(P_\mu)$  bypasses the need to obtain a highly accurate approximation of the trace and allows us to use significantly fewer matrix-vector products. The main tradeoff is that we do not obtain the precise number of eigenvalues below each detected gap, but only an estimate.
To the best of our knowledge, this perspective and the resulting approach for the theoretical analysis have not appeared before in the literature.
Although we have not explored this direction, it is possible that some of the techniques used in this work may also be applicable to related research areas in which the combination of Hutchinson and Lanczos is employed, such as spectral density estimation.

}

\section{Numerical experiments}
\label{sec:numerical-experiments}

In this section we run some experiments to investigate the performance of \cref{algorithm:eigenvalue-gap-finder--final}. 
For all the experiments, we use \cref{algorithm:eigenvalue-gap-finder--final} with a failure probability $\delta = 10^{-2}$ and a number of Lanczos iterations~$m$ that is either fixed in advance or computed according to \cref{eqn:lanczos-iterations-m--best-choice--delta-theta} in order to find all gaps with relative width larger than a certain target $\theta$. When not stated differently, for the Lanczos algorithm we employ the error estimate $\postbound_m(\mu_j) = c \abs{\quadform_{m, j} - \quadform_{m, j+1}}$ with a safety factor $c = 2$, and we use the more robust upper and lower estimates $\qfupperhatsafe(\mu)$ and $\qflowerhatsafe(\mu)$ defined in \cref{eqn:lanczos-upper-lower-robust-safe-estimates}, with 
$\mathcal{M} = \{m-d+1, \dots, m\}$ with $d = 3$. These estimates are cheaper to compute compared to the a posteriori error bound from \cref{prop:lanczos-a-posteriori-error-bound}, and they are also quite reliable, as shown in \cref{example:trace-bound-final}. Even though the estimate $c\abs{\quadform_{j,m} - \quadform_{j, m+1}}$ does not have the same theoretical guarantees as the a posteriori error bound, we have seen in \cref{example:lanczos-bound-comparison} that it is usually much closer to the actual error. All the experiments have been done using MATLAB \rev{R2024b} on a laptop running Ubuntu \rev{24.04}, with 32 GB of RAM and an Intel Core \rev{Ultra 9 185H} CPU with base clock rate \rev{2.3 GHz}. 
The code to reproduce the experiments in this section is available on Github at \url{https://github.com/simunec/eigenvalue-gap-finder}.

\subsection{Scaling with the gap width}
\label{experiments-gap-width}
We start by examining the behavior of \cref{algorithm:eigenvalue-gap-finder--final} with respect to the relative gap width $\theta$.

We consider a symmetric tridiagonal matrix $A$ of size $n = 30000$ obtained as a sum $A = D + T$, where~$T$ is a random symmetric tridiagonal matrix with $\mathcal{N}(0,1)$ entries and $D$ is a diagonal matrix. The eigenvalues of $D$ are chosen so that the spectrum of $A$ has a gap with relative width approximately $\theta$. Specifically, the eigenvalues of $D$ are contained in the intervals $[1, 10^3] \cup [10^3 + x, 10^4]$, where $x$ satisfies
\begin{equation*}
	\frac{x/2}{10^4 - 10^3 - x/2} = \theta,
\end{equation*}
so that the relative width of the gap $[10^3, 10^3+x]$ is $\theta$, and hence the spectrum of the perturbed matrix $A = D + T$ also has a gap with relative width approximately $\theta$.
We consider a matrix $D$ with $20000$ logspaced eigenvalues in $[1, 10^3]$ and $10000$ logspaced eigenvalues in $[10^3+x, 10^4]$.

We estimate this gap using \cref{algorithm:eigenvalue-gap-finder--final}, with $m$ computed according to \cref{eqn:lanczos-iterations-m--best-choice--delta-theta}, so that we can expect it to successfully find all gaps with relative width larger than $\theta$; we use $N_f = 10000$ parameters $\mu$, logarithmically spaced on the interval $[1, 10^4]$. 
For each $\theta$, in \cref{table:gapfinder-gapscale} we show the true gap computed by diagonalizing $A$, the gap estimated by \cref{algorithm:eigenvalue-gap-finder--final} and the approximate number of eigenvalues below the gap, obtained by rounding the trace approximation $\vec x^T P_\mu \vec x$ for $\mu$ equal to the left endpoint of the estimated gap, as well as the number of Lanczos iterations $m$, the execution time of the algorithm and the time required for the computation of the eigenvalues of $A$. 
We see that the gap is found successfully for all $\theta$, with the estimated gap being slightly larger than the true gap only for $\theta = 0.1$ and $\theta = 0.05$. The estimated number of eigenvalues below the gap roughly approximates the exact value $20000$, but it is still relatively far from the correct number since we are using only one sample vector for Hutchinson's estimator. 
Note that the number of Lanczos iterations scales approximately as~$O(1/\theta)$, but the execution time increases more than linearly in the number of Lanczos iterations $m$; this is likely due to the $O(m^2)$ cost for the computation of eigenvalues and eigenvectors of the projected matrix $T_m$ becoming \rev{an increasingly} dominant part of the computation when the subspace dimension~$m$ is large. On the other hand, the execution time for $\texttt{eig}$ remains constant, so when the gap width $\theta$ becomes sufficiently small it would be more efficient to directly diagonalize the matrix instead of using \cref{algorithm:eigenvalue-gap-finder--final}. Of course, this would not be the case if the matrix dimension $n$ increases, as we show in the following experiment.

\begin{table}
	\caption[Performance of \cref{algorithm:eigenvalue-gap-finder--final} with decreasing gap width]{Performance of \cref{algorithm:eigenvalue-gap-finder--final} with $n = 30000$ and varying gap width $\theta$. }
	\label{table:gapfinder-gapscale}
	\begin{tabular}{S[table-format=1.4]|ccS[table-format=5.0]S[table-format=4.0]cc}
	\toprule
	$\theta$  & true gap & est.~gap & {est.~$\eigcount(\mu)$} & {$m$} & {time (s)} & {time \texttt{eig} (s)}\\
	\midrule
	0.1 & $[$1001.80, 2633.87$]$ & $[$1016.48, 2637.19$]$ & 19637 & 112 & \rev{0.103} & \rev{2.318} \\
	0.05 & $[$1001.65, 1856.42$]$ & $[$999.77, 1856.64$]$ & 19776 & 226 & \rev{0.174} & \rev{2.212} \\
	0.025 & $[$1001.15, 1435.81$]$ & $[$1001.61, 1434.55$]$ & 20198 & 456 & \rev{0.312} & \rev{2.293} \\
	0.01 & $[$1001.17, 1175.30$]$ & $[$1001.61, 1174.65$]$ & 20027 & 1156 & \rev{0.789} & \rev{2.077} \\
	0.005 & $[$1000.69, 1085.83$]$ & $[$1000.69, 1085.18$]$ & 19862 & 2342 & \rev{1.894} & \rev{2.308} \\
	0.0025 & $[$1003.69, 1042.75$]$ & $[$1004.38, 1042.08$]$ & 19709 & 4745 & \rev{5.537} & \rev{2.311} \\
	\bottomrule
\end{tabular}
\end{table}

\subsection{Scaling with the matrix size}
\label{subsec:experiments-matrix-size}

Let us now consider a sequence of matrices with increasing size and approximately constant gap width. We use a setup similar to \cref{subsec:experiments-matrix-size}, with fixed relative gap width $\theta = 0.01$ and increasing matrix size~$n$. We take $A = D + T$, where $T$ is symmetric tridiagonal with random $\mathcal{N}(0,1)$ entries and~$D$ is diagonal with $\spec(A) \subset [1, 10^3] \cup [10^3 + x, 10^4]$, with~$n/2$ logspaced eigenvalues below the gap and~$n/2$ logspaced eigenvalues above the gap. The value of $x$ is chosen so that $A$ has relative gap width approximately $\theta$, as in \cref{subsec:experiments-matrix-size}.

We use \cref{algorithm:eigenvalue-gap-finder--final} to estimate the gap, with the number of Lanczos iterations $m$ computed according to \cref{eqn:lanczos-iterations-m--best-choice--delta-theta} and $N_f = 10000$ parameters $\mu$, logarithmically spaced over the interval $[1, 10^4]$. The results for a matrix dimension that increases from $n = 5000$ to $n = 80000$ are shown in \cref{table:gapfinder-sizescale}. In this case, the gap is found correctly for all matrix sizes, with none of the estimated gaps being larger than the true gap computed via a diagonalization. Note that the number of Lanczos iterations increases very slowly with the matrix dimension, in accordance with the discussion in \cref{subsec:computational-cost}. Hence, the execution time of the algorithm increases roughly linearly, since with a large matrix size~$n$ and moderate subspace dimension~$m$ the leading term in the computational cost is given by the $m$ matrix-vector products with $A$ in the construction of the Lanczos basis, which cost $O(mn)$ for a tridiagonal matrix. On the other hand, the time required for computing the eigenvalues of the tridiagonal matrix~$A$ scales like $O(n^2)$, so it quickly becomes more expensive than \cref{algorithm:eigenvalue-gap-finder--final}. Note that if $A$ was a sparse matrix that is not tridiagonal, the diagonalization time would scale like $O(n^3)$, making the difference between the two approaches even more evident.

It is interesting to observe that \cref{algorithm:eigenvalue-gap-finder--final} can be also used to cheaply obtain a rough estimate of the gap by taking a larger failure probability $\delta$ and fixing the number of Lanczos iterations $m$ in advance instead of computing it with \cref{eqn:lanczos-iterations-m--best-choice--delta-theta}. For instance, we show in \cref{table:gapfinder-sizescale-rough} the performance of \cref{algorithm:eigenvalue-gap-finder--final} with $\delta = 0.5$ and $m = 250$. We see that the gap in the spectrum is still located successfully, although the accuracy is generally lower and sometimes the estimated gap is larger than the true gap, for instance in the case $n = 5000$. Note that the estimated number of eigenvalues below the gap coincides with the corresponding number from \cref{table:gapfinder-sizescale}, even if the number of Lanczos iterations is different: this happens because we used the same random vector $\vec x$ for Hutchinson's trace estimator, and in both cases we are approximating the same quadratic form $\vec x^T P_\mu \vec x$ (the values of $\mu$ in the two cases may be different, but as long as they are both inside the gap the corresponding values of~$\vec x^T P_\mu \vec x$ coincide). In particular, this also confirms that the error in the estimated number of eigenvalues below the gap is essentially only due to the error in the stochastic trace estimator, and not to the error in the approximation of the quadratic form via the Lanczos algorithm.

\begin{table}
	\centering
	\caption[Performance of \cref{algorithm:eigenvalue-gap-finder--final} with increasing matrix size]{Performance of \cref{algorithm:eigenvalue-gap-finder--final} with fixed gap width $\theta = 0.01$ and increasing matrix size $n$.}
	\label{table:gapfinder-sizescale}
	\begin{tabular}{S[table-format=5.0]|ccS[table-format=5.0]S[table-format=4.0]cS[table-format=2.3]}
		\toprule
		$n$  & true gap & est.~gap & {est.~$\eigcount(\mu)$} & {$m$} & {time (s)} & {time \texttt{eig} (s)}\\
		\midrule
		5000 & $[$1000.58, 1177.15$]$ & $[$1000.69, 1176.81$]$ & 2377 & 1067 & \rev{0.259} & \rev{0.077} \\
		10000 & $[$1001.11, 1175.88$]$ & $[$1001.61, 1175.73$]$ & 4956 & 1101 & \rev{0.405} & \rev{0.291} \\
		20000 & $[$1000.23, 1177.01$]$ & $[$1000.69, 1176.81$]$ & 9835 & 1136 & \rev{0.572} & \rev{1.071} \\
		40000 & $[$1002.67, 1175.70$]$ & $[$1003.46, 1174.65$]$ & 19930 & 1171 & \rev{0.984} & \rev{3.696} \\
		80000 & $[$1001.39, 1176.18$]$ & $[$1001.61, 1175.73$]$ & 39874 & 1205 & \rev{1.842} & \rev{13.161} \\
		\bottomrule
	\end{tabular}
\end{table}

\begin{table}
	\centering
	\caption[Performance of \cref{algorithm:eigenvalue-gap-finder--final} with fixed number of Lanczos iterations]{Performance of \cref{algorithm:eigenvalue-gap-finder--final} with fixed gap width $\theta = 0.01$ and increasing matrix size $n$. The number of Lanczos iterations is fixed to $m = 250$ and the failure probability is set to $\delta = 0.5$.}
	\label{table:gapfinder-sizescale-rough}
	\begin{tabular}{S[table-format=5.0]|ccS[table-format=5.0]S[table-format=3.0]cS[table-format=2.3]}
		\toprule
		$n$  & true gap & est.~gap & {est.~$\eigcount(\mu)$} & {$m$} & {time (s)} & {time \texttt{eig} (s)}\\
		\midrule
		5000 & $[$1000.58, 1177.15$]$ & $[$998.85, 1178.98$]$ & 2377 & 250 & \rev{0.072} & \rev{0.077} \\
		10000 & $[$1001.11, 1175.88$]$ & $[$1003.46, 1170.33$]$ & 4956 & 250 & \rev{0.098} & \rev{0.287} \\
		20000 & $[$1000.23, 1177.01$]$ & $[$1001.61, 1167.10$]$ & 9835 & 250 & \rev{0.145} & \rev{1.037} \\
		40000 & $[$1002.67, 1175.70$]$ & $[$1002.54, 1161.73$]$ & 19930 & 250 & \rev{0.230} & \rev{3.748} \\
		80000 & $[$1001.39, 1176.18$]$ & $[$1005.31, 1175.73$]$ & 39874 & 250 & \rev{0.399} & \rev{13.085} \\
				\bottomrule
	\end{tabular}
\end{table}

\subsection{Examples from applications}
\label{subsec:experiments-real-world}

\rev{In this section, we demonstrate the effectiveness of \cref{algorithm:eigenvalue-gap-finder--final} on two illustrative examples arising from practical applications.}

\subsubsection{Dirac comb Hamiltonian}
\label{subsubsec:experiments-diraccomb}

\rev{
As a first model problem, we consider the one-dimensional discrete Hamiltonian operator
\begin{equation*}
	A = -L + V,
\end{equation*}
where $L$ is a finite difference discretization of the second derivative on $[0,N]$ with periodic boundary conditions using a uniform grid with $k$ points on each unit interval, for a total of $n = kN$ points, and $V$ is a diagonal matrix representing a discrete \textit{Dirac comb} periodic potential, which takes the value $k^2$ at the points $x = 1, 2, \dots, N-1$, and is zero otherwise. This problem is a toy model that mimics the properties of a Hamiltonian of a one-dimensional physical system with a periodic potential, whose spectrum is typically characterized by several gaps, representing forbidden energy levels for the electrons in the system; see for instance \cite[Chapter~7]{KM18} and \cite{LHZ25} for more details.

We use a discretization with $N = 2000$ and $k = 5$ for this example, for a total of $n = 10000$ discretization points. The spectrum of the resulting symmetric tridiagonal matrix $A \in \R^{n \times n}$ has five bands, separated by four relatively large gaps, each containing a single isolated eigenvalue, which splits it into two parts; we will refer to them respectively as the lower and upper part of the gap. We shift and scale $A$ so that its spectrum is contained in the interval $[0, 10]$, to facilitate visualization of the gaps.  
We run \cref{algorithm:eigenvalue-gap-finder--final} to detect the gaps, both with the robust consecutive difference estimates \cref{eqn:lanczos-upper-lower-robust-safe-estimates} and with the error bounds from \cref{prop:lanczos-a-posteriori-error-bound}, using $m = 150$ Lanczos iterations and $d = 3$. In \cref{fig:gapfinder-diraccomb-traceest} we show the spectrum of $A$, the approximations to $\vec x^T P_\mu \vec x$, and the upper and lower bounds obtained with the two approaches, where $\vec x$ is the single random vector used in Hutchinson's estimator; we use the same vector $\vec x$ for the two variants of \cref{algorithm:eigenvalue-gap-finder--final}. 
Note that the bounds from \cref{prop:lanczos-a-posteriori-error-bound} are significantly less accurate than the estimates \cref{eqn:lanczos-upper-lower-robust-safe-estimates} in regions far from the gaps, but both approaches are extremely accurate inside the gaps. 
with the chosen parameters, \cref{algorithm:eigenvalue-gap-finder--final} successfully detects both parts of the first gap, the lower half of the second and third gaps, and both parts of the fourth gap; by increasing the number of Lanczos iterations to $m = 250$, all gaps are successfully detected.

In \cref{table:gapfinder-diraccomb} we compare the exact gaps obtained by directly computing the eigenvalues of $A$ with the approximate ones obtained with \cref{algorithm:eigenvalue-gap-finder--final}, for the lower half of the four gaps, along with the time required to run each method. 
Note that the gaps found by using the a posteriori bounds from \cref{prop:lanczos-a-posteriori-error-bound} are slightly smaller than the ones obtained by estimating the Lanczos error with consecutive differences; this is a consequence of the lower accuracy of the a posteriori bounds. Moreover, the execution time is quite higher with the a posteriori bounds, due to the $O(m N_f L d)$ term in the computation of the error bounds from \cref{prop:lanczos-a-posteriori-error-bound}; indeed, it turns out that this is the most expensive part of the computation in this setting, where we use $N_f = 1000$ different values of $\mu$ and a discretization with $L = 1000$ points to evaluate the maximum of $\abs{g_m(z)}$ over the spectral interval of~$A$. On the other hand, when the error in the Lanczos algorithm is estimated using consecutive differences, the portion of the computational cost that depends on $N_f$ scales as $O(m N_f d)$, which is significantly less expensive than the error bounds from \cref{prop:lanczos-a-posteriori-error-bound} when $L = 1000$ (see \cref{subsec:computational-cost}).

}

\begin{table}
	\centering
	\caption[Performance of \cref{algorithm:eigenvalue-gap-finder--final} on the discrete Hamiltonian with Dirac comb potential]{Performance of \cref{algorithm:eigenvalue-gap-finder--final} on the discrete Hamiltonian with a Dirac comb potential, using the robust consecutive difference estimate \cref{eqn:lanczos-upper-lower-robust-safe-estimates} and the a posteriori bound from \cref{prop:lanczos-a-posteriori-error-bound}. The number of Lanczos iterations is set to $m = 150$ and the failure probability to $\delta = 10^{-2}$. The gaps in the table correspond to the lower parts of the gaps in \cref{fig:dirac-comb-spectrum}.}
	\label{table:gapfinder-diraccomb}
	\begin{tabular}{l|ccccS[table-format=2.3]}
		\toprule
		method  & first gap & second gap & third gap & fourth gap & {time (s)} \\
		\midrule
		\texttt{eig} & [0.739, 1.326] & [3.101, 3.828] & [6.021, 6.757] & [8.383, 8.937] & 9.498 \\
		consec.~diff. & [0.741, 1.321] & [3.103, 3.824] & [6.106, 6.747] & [8.418, 8.929] & 0.028 \\
		\cref{prop:lanczos-a-posteriori-error-bound} & [0.781, 1.321] & [3.223, 3.754] & [6.136, 6.717] & [8.418, 8.929] & 0.365 \\
				\bottomrule
	\end{tabular}
\end{table}

\begin{figure}
	\centering
	\includegraphics[width=0.45\textwidth]{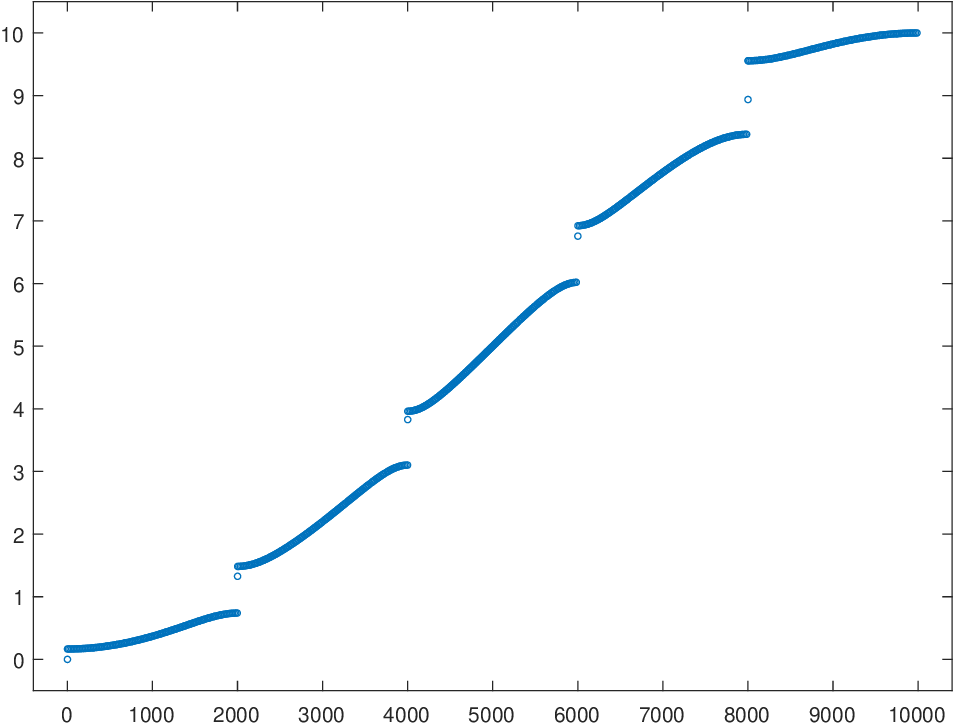}
	\caption[Spectrum of the Dirac comb Hamiltonian]{Spectrum of the Dirac comb discrete Hamiltonian.   
	\label{fig:dirac-comb-spectrum}}
\end{figure}

\begin{figure}
	\centering
	\makebox[\linewidth][c]{
		\begin{subfigure}[t]{.5\textwidth}
			\includegraphics[width=\textwidth]{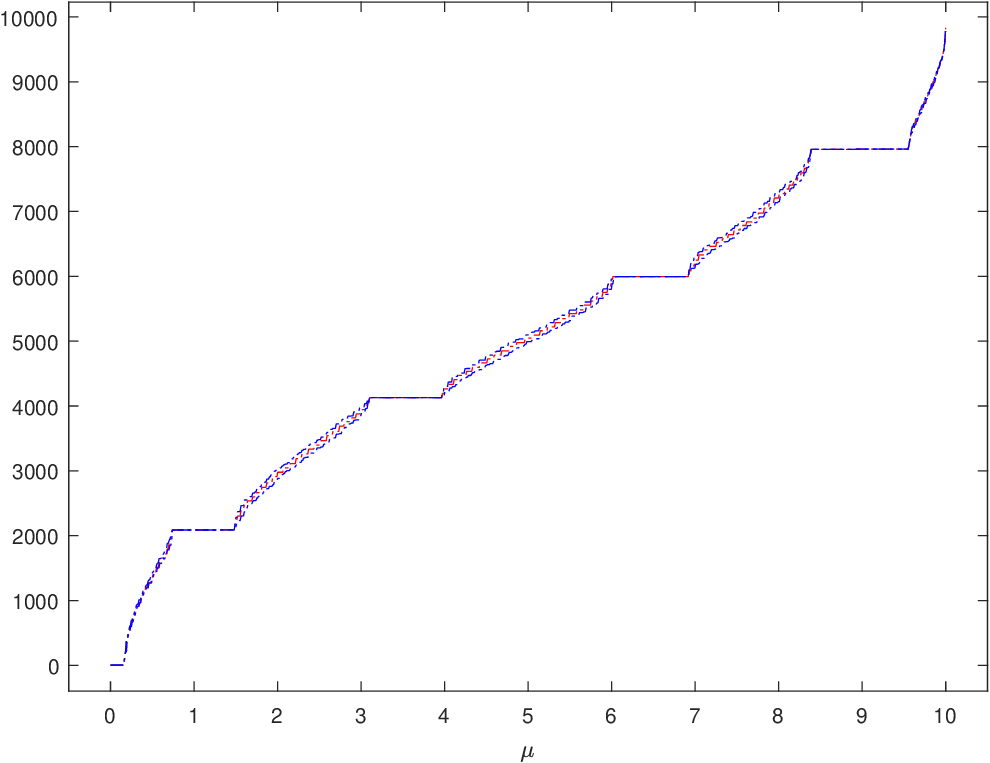}
		\end{subfigure}
		\begin{subfigure}[t]{.5\textwidth}
			\includegraphics[width=\textwidth]{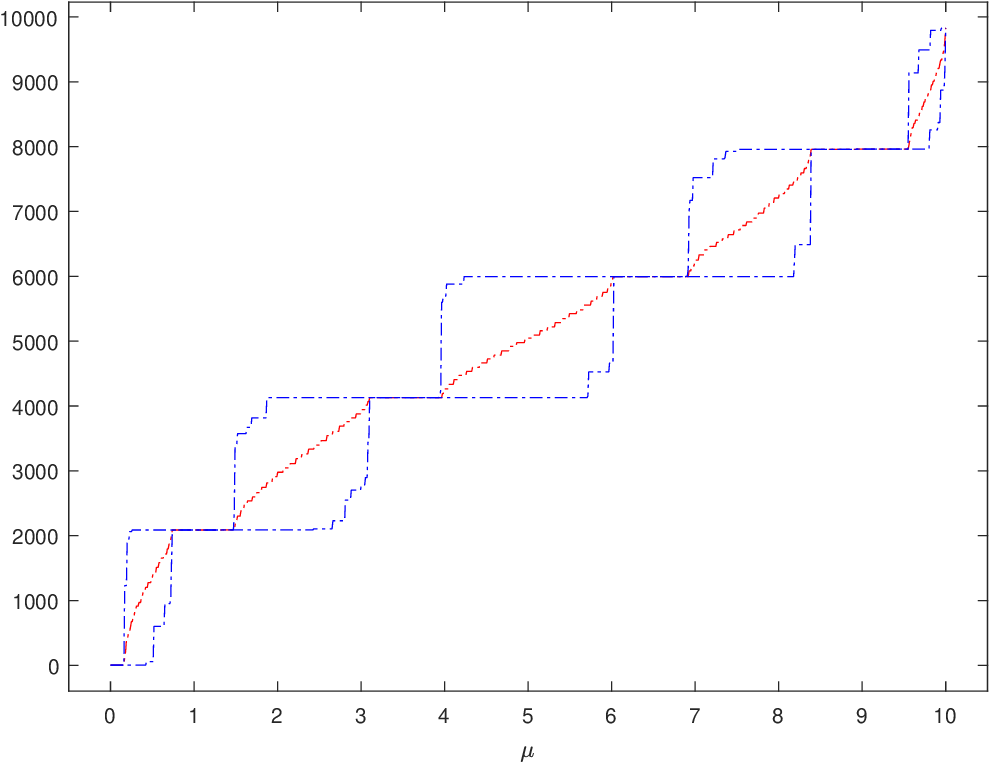}
		\end{subfigure}
		}
	\caption[Approximation of $\trace(P_\mu)$ with Hutchinson's estimator for several $\mu$]{Approximations to $\vec x^T P_\mu \vec x$, and upper and lower bounds obtained with \cref{algorithm:eigenvalue-gap-finder--final} for different $\mu$, with $m = 150$ Lanczos iterations, for the problem in \cref{subsubsec:experiments-diraccomb}. Left: \cref{algorithm:eigenvalue-gap-finder--final} with robust consecutive difference error estimate \cref{eqn:lanczos-upper-lower-robust-safe-estimates}. Right: \cref{algorithm:eigenvalue-gap-finder--final} with a posteriori error bound from \cref{prop:lanczos-a-posteriori-error-bound}.
	\label{fig:gapfinder-diraccomb-traceest}}
\end{figure}

\subsubsection{Hydrogen molecule chain}
\label{subsubsec:experiments-hydrogen}

\rev{
As an example from electronic structure computations, we consider the discretized Hamiltonian associated with a linear chain of 1250 H$_2$ molecules. In contrast to the atomic hydrogen chain, which is widely used as a theoretical benchmark but does not occur in nature, the molecular chain corresponds to a physically realizable system while still being straightforward to define; see, e.g., \cite{BiborskiH2chain}.

The matrices are generated with the open-source package PySCF~\cite{PySCF}, a Python package for performing electronic structure computations using Density Functional Theory. The code returns the Hamiltonian $\widetilde{A}$, obtained through a Galerkin discretization over a basis of Gaussian functions, and the overlap matrix $S$. Although the basis functions have global support, the Hamiltonian and overlap matrices are approximately sparse: most entries decay rapidly with the distance from the main diagonal and are set to zero once they fall below the threshold used for the integrals\footnote{In libcint, the constant \texttt{EXPCUTOFF} is set to $10^{-15}$ 
in \texttt{cint.h}; see the libcint GitHub repository \url{https://github.com/sunqm/libcint}.}.
The basis can be orthogonalized by taking the Cholesky factorization $S=R^T R$, where $S$ is the overlap matrix. Thus, the new Hamiltonian is $A = R^{-T}\widetilde{A}R^{-1}$, and the energy levels correspond to the eigenvalues of $A$; see~\cite[Section~3]{BenziProjector13}. When the intermolecular spacing in the chain is sufficiently large, the system behaves as a semiconductor~\cite{BiborskiH2chain}, i.e., the spectrum exhibits a gap between occupied and unoccupied states. To ensure this, we impose an intermolecular distance of $2.5$\AA, while the bond length within the H$_2$ molecule is set to $0.74$\AA.

The test matrix $A$ has size $n=5000$. The spectrum, shown in \cref{fig:gapfinder-h2-spectrum}, has two relatively large gaps and a smaller gap in the middle of the spectrum. There is a single isolated eigenvalue within the third gap, splitting it into two parts. In electronic structure computations, the gap of interest is the HOMO-LUMO gap, which in this setting corresponds to the first gap in \cref{fig:gapfinder-h2-spectrum}. The original Hamiltonian $\widetilde{A}$ and the Cholesky factor $R$ have 189622 and 99804 nonzero entries, respectively. Although $A$ is also approximately sparse, in order to compute the matrix-vector products needed for the construction of the Krylov basis it is more computationally efficient to use the representation $A=R^{-T}\widetilde A R^{-1}$ instead of forming $A$ explicitly, so we rely on the solution of the linear systems with $R$ and $R^T$, which are triangular and sparse, and on multiplications with $\widetilde{A}$.

In this test we use a setup similar to \cref{subsubsec:experiments-diraccomb}. We find the gaps in the spectrum of $A$ using \cref{algorithm:eigenvalue-gap-finder--final}, and we compare the robust consecutive difference estimates defined in \cref{eqn:lanczos-upper-lower-robust-safe-estimates} with the a posteriori bound from \cref{prop:lanczos-a-posteriori-error-bound}, both obtained using $m = 100$ Lanczos iterations and $d = 3$; we use $N_f = 1000$ values of $\mu$, and a discretization with $L = 1000$ points for evaluating the a posteriori error bounds.
We plot in \cref{fig:gapfinder-h2-traceest} the approximations to $\vec x^T P_\mu \vec x$ and the upper and lower bounds obtained with the two approaches. We again observe that the upper and lower bounds obtained by using \cref{prop:lanczos-a-posteriori-error-bound} are much more conservative compared to those based on consecutive difference estimates, especially far from the gaps in the spectrum.
}

\begin{figure}
	\centering
	\includegraphics[width=0.5\textwidth]{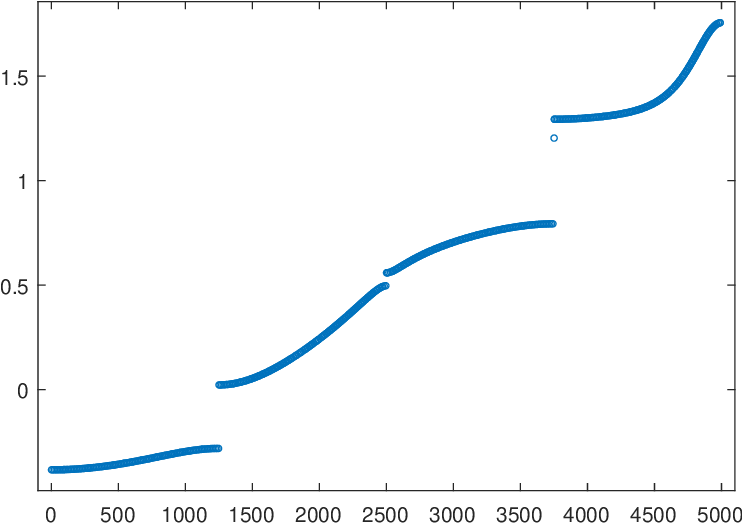}
	\caption[Spectrum of the discrete Hamiltonian]{Spectrum of the discrete Hamiltonian for the hydrogen molecule chain.   
	\label{fig:gapfinder-h2-spectrum}}
\end{figure}

\begin{figure}
	\centering
	\makebox[\linewidth][c]{
		\begin{subfigure}[t]{.50\textwidth}
			\includegraphics[width=\textwidth]{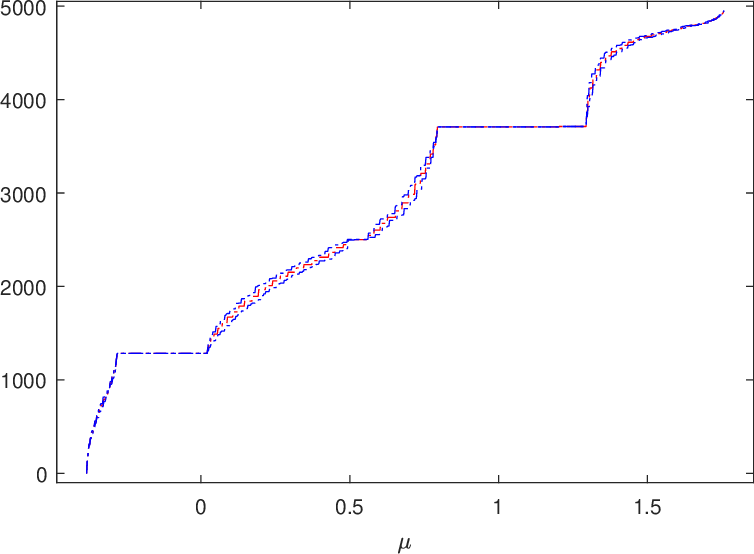}
		\end{subfigure}
		\begin{subfigure}[t]{.50\textwidth}
			\includegraphics[width=\textwidth]{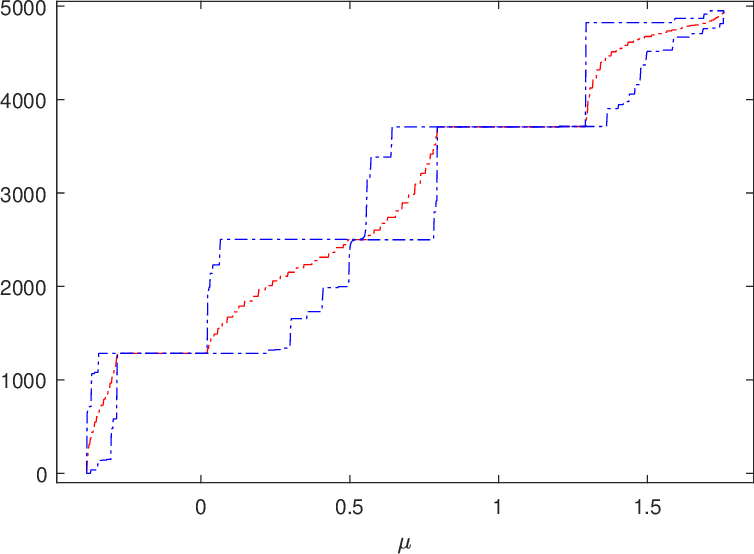}
		\end{subfigure}
		}
	\caption[Approximation of $\trace(P_\mu)$ with Hutchinson's estimator for several $\mu$]{Approximations to $\vec x^T P_\mu \vec x$, and upper and lower bounds obtained with \cref{algorithm:eigenvalue-gap-finder--final} for different $\mu$, with $m = 100$ Lanczos iterations, for the problem in \cref{subsubsec:experiments-hydrogen}. Left: \cref{algorithm:eigenvalue-gap-finder--final} with robust consecutive difference error estimate \cref{eqn:lanczos-upper-lower-robust-safe-estimates}. Right: \cref{algorithm:eigenvalue-gap-finder--final} with a posteriori error bound from \cref{prop:lanczos-a-posteriori-error-bound}.   
	\label{fig:gapfinder-h2-traceest}}
\end{figure}

\begin{table}
	\centering
	\caption[Performance of \cref{algorithm:eigenvalue-gap-finder--final} on the Hamiltonian associated with the H$_2$ chain]{Performance of \cref{algorithm:eigenvalue-gap-finder--final} on the discrete Hamiltonian $A$ associated with a H$_2$ chain, using the robust consecutive difference estimate \cref{eqn:lanczos-upper-lower-robust-safe-estimates} and the a posteriori bound from \cref{prop:lanczos-a-posteriori-error-bound}. The number of Lanczos iterations is set to $m = 100$ and the failure probability to $\delta = 10^{-2}$.}
	\label{table:gapfinder-realmat}
	\begin{tabular}{l|cccS[table-format=2.3]}
		\toprule
		method  & first gap & second gap & third gap & {time (s)} \\
		\midrule
\texttt{eig} & [-0.282, 0.022] & [0.793, 1.204] & [1.204, 1.294] & 15.284 \\
\cref{algorithm:eigenvalue-gap-finder--final} w/~consec.~diff. & [-0.281, 0.021] & [0.794, 1.202] & [1.204, 1.294] & 0.064 \\
\cref{algorithm:eigenvalue-gap-finder--final} w/~\cref{prop:lanczos-a-posteriori-error-bound} & [-0.273, 0.006] & [0.814, 1.202] & [1.204, 1.272] & 0.386 \\
		\bottomrule
	\end{tabular}
\end{table}

\rev{
The gaps found by the two variants of \cref{algorithm:eigenvalue-gap-finder--final} and the exact gaps obtained by diagonalizing~$A$ are shown in \cref{table:gapfinder-realmat}, where we also include the execution times for all the methods. With the chosen parameters, \cref{algorithm:eigenvalue-gap-finder--final} is not able to detect the small gap in the middle of the spectrum, but increasing the number of Lanczos iterations to $m = 300$ allows us to detect that gap as well. The results are consistent with the conclusions drawn from the experiment in \cref{subsubsec:experiments-diraccomb}. 
}

\section{Conclusions}
\label{sec:conclusions}

We have described and analyzed an algorithm for identifying gaps in the spectrum of a real symmetric matrix by simultaneously approximating the traces of spectral projectors associated with different slices of its spectrum, combining Hutchinson's stochastic trace estimator and the Lanczos algorithm. 
We have investigated the behavior of Hutchinson's estimator, and provided a priori and a posteriori error bounds for the Lanczos algorithm. 
This analysis led to an algorithm capable of detecting all gaps larger than a specified threshold with high probability. 
We have also determined that for this problem the best strategy is to use a single random sample vector for Hutchinson's estimator, and utilize all the available matrix-vector products to approximate a single quadratic form via the Lanczos algorithm. 
Our numerical tests demonstrate that the developed algorithm can efficiently and reliably detect gaps, especially when their width is relatively large. 
Within our algorithm we have proposed to use either an a posteriori error bound for the Lanczos approximation error (\cref{prop:lanczos-a-posteriori-error-bound}), or a simple error estimate based on consecutive differences. Although the a posteriori error bound has better theoretical guarantees, the robust version of the error estimate from \cref{example:trace-bound-final} has more accurate results in practice, and it is also computationally cheaper, notably when a large number of values of $\mu$ is used. 
Even when the algorithm fails to detect a gap, it still provides upper and lower bounds for $\vec x^T P_\mu \vec x$ for all considered values of $\mu$, which can be valuable for selecting a starting point for other algorithms, such as those based on $LDL^T$ factorizations.

\section*{Acknowledgements}

\rev{We are grateful to Tyler Chen and two anonymous reviewers for their valuable comments and suggestions.}
The first and third authors are members of the INdAM Research group GNCS (Gruppo Nazionale di Calcolo Scientifico), and their work 
was supported in part by MUR
(Italian Ministry of University and Research) through the PRIN Project
20227PCCKZ (``Low-Rank Structures and Numerical Methods 
in Matrix and Tensor Computations and their Applications''). The first
author also acknowledges partial support from MUR through the 
PNRR MUR Project PE0000023-NQSTI. The work of the second author is funded by the Research Foundation - Flanders (FWO) via the junior postdoctoral fellowship 12A1325N.

\bibliographystyle{siam}
\bibliography{references}

\end{document}